\theoremstyle{plain}
\newtheorem{theorem}{Theorem}[section]
\newtheorem{lemma}[theorem]{Lemma}
\newtheorem{corollary}[theorem]{Corollary}
\newtheorem{proposition}[theorem]{Proposition}
\newtheorem*{corollary*}{Corollary}
\DeclareMathOperator{\var}{Var}
\theoremstyle{definition}
\newtheorem{definition}[theorem]{Definition}
\newtheorem{example}[theorem]{Example}
\def\R2{\mathbb{R}^2}
\newcommand{\onetorus}{\mathbb{S}^1}
\title[Ergodic group extensions]{Ergodic infinite group extensions of geodesic flows on translation surfaces}
\author{David Ralston}
\address{Ben Gurion University, Department of Mathematics\\ POB 653 \\ Beer Sheva, 84105\\ ISRAEL}
\email{ralston.david.s@gmail.com}
\author{Serge Troubetzkoy}
\address{Aix-Marseille University, CPT, IML, Frumam, Marseille}
\email{troubetz@iml.univ-mrs.fr}
\thanks{The first author is supported by the Center for Advanced Studies at Ben Gurion University of the Negev as well as the Israel Council for Higher Education, and was partially supported by the Erwin Schr\"{o}dinger International Institute for Mathematical Physics during preparation of this manuscript. This research is partially supported by the ANR project Perturbations.}
\date{\today}
\begin{document}
\begin{abstract}
We show that generic infinite group extensions of geodesic flows on square tiled translation surfaces are ergodic in almost every direction, subject to certain natural constraints.  Recently K. Fr\c{a}czek and C. Ulcigrai have shown that certain concrete staircases, covers of square-tiled surfaces, are not ergodic in almost every direction. In contrast
we show the almost sure ergodicity of other concrete staircases.  An appendix provides a combinatorial approach for the study of square-tiled surfaces.
\end{abstract}
\maketitle

\section{Introduction} In 1986 Kerchoff, Masur and Smillie showed that the
the geodesic flow on a compact translation surface is
ergodic in almost every direction \cite{1986}.  On the other hand,  the study  of  the dynamical properties on periodic infinite translation
surfaces is in its infancy.  There is a natural necessary and sufficient condition for the recurrence of the geodesic flow
in almost every direction for $\mathbb{Z}$ or $\mathbb{R}$ covers \cite{HoW}.  Generic non-periodic translation surfaces
are almost surely recurrent \cite{T}.  The almost sure diffusion rate in the periodic full occupancy 
wind-tree model, a certain important example of infinite translation
surface, has also recently been understood \cite{DHL}.  We turn to the question of ergodicity.  In \cite{hooper-hubert-weiss}, a natural example the ``simple staircase'' was shown to be ergodic in a.e.\ direction.  In fact, the authors noticed that the ergodicity reduces to a classically studied question of ergodicity of cylinder flows (group valued skew products) over circle rotations initiated by Conze and studied by many authors, see \cite{conze-cocycles-rotation} for a good survey.  The question of ergodicity for other periodic translation surfaces reduces to the question of ergodicity of cylinder flows over interval exchange transformations.

Recently, Fraczek and Ulcigrai have shown that certain periodic infinite translation surfaces are not ergodic in almost every direction
\cite{fraczek-ulcigrai}.  More specifically, if the compact square-tiled translation surface $M$ is in the stratum
$\mathcal{H}(2)$, and the infinite translation surface $\tilde{M}$ is a $\mathbb{Z}$ unramified cover of $M$, then the infinite surface is not ergodic in a.e.\ direction.
They showed a similar result for some $\mathbb{Z}^2$ extensions such as the periodic full occupancy wind-tree model.

In this article, we study  the case when the surface $\tilde{M}$ is a ramified cover of $M$. This is generically the case.
In the language of ergodic theory the flow on $\tilde{M}$ is a $G$-valued skew product over the geodesic flow on $M$
for some Abelian locally compact group $G$.  Our result is a sufficient condition for both recurrence and almost sure ergodicity
to hold.  This condition is based on ideas developed by M. Boshernitzan \cite{boshernitzan} and K. Schmidt \cite{schmidt}.

While our theorem is somewhat technical to state, it is applicable to many concrete examples.  Suppose that $M$ is
a square tiled surface with a single-cylinder direction (for example the torus),
then any reasonable method of producing a $\mathbb{Z}^d$
(or more generally an Abelian locally compact group $G$) infinite surface $\tilde{M}$ which is a skew product over $M$ is generically
ergodic in almost every direction.  
We also  produce concrete examples of staircases ergodic in almost every direction such as the staircase in Figure \ref{Some staircases}.
In the appendix, we give a procedure for determining which square-tiled surfaces have a single-cylinder direction, as well as a quick method for constructing such surfaces and determining their genus as well as the number and type of singularities.

\makeatletter
\def\Ddots{\mathinner{\mkern1mu\raise\p@
\vbox{\kern7\p@\hbox{.}}\mkern2mu
\raise4\p@\hbox{.}\mkern2mu\raise7\p@\hbox{.}\mkern1mu}}
\makeatother

\begin{figure}[htb]
\center{\begin{tikzpicture}
\draw (6,0) rectangle (7,1);
\draw (7,0) rectangle (8,1);
\draw (8,1) rectangle (9,2);
\draw (7,1) rectangle (8,2);
\draw (9,1) rectangle (10,2);
\draw (9,2) rectangle (10,3);
\draw (10,2) rectangle (11,3);
\draw (10,3) rectangle (11,4);
\draw (11,3) rectangle (12,4);
\draw (12,3) rectangle (13,4);
\node at (13,4.5){$\Ddots$};
\node at (6,-0.5){$\Ddots$};
\end{tikzpicture}}
\caption{\label{Some staircases}The periodic staircase with alternating steps of length 2 and 3 is ergodic in almost every direction.
Opposite sides are identified.}
\end{figure}
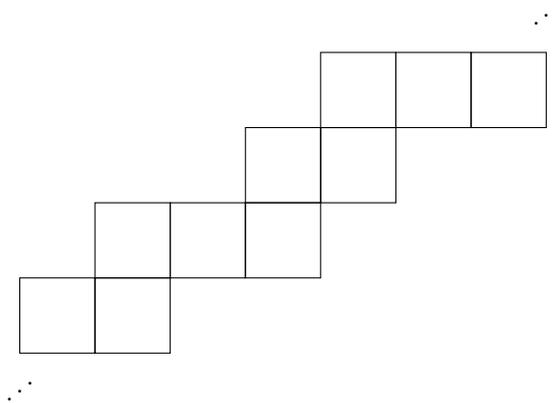

J. Chaika and P. Hubert have recently announced the following related result \cite{chaika-hubert}:
for a given $f:[0,1) \to \mathbb{R}$ with integral 0,
which is the finite sum of characteristic functions of intervals,
for a set of full measure of interval exchange transformations $T$ the
skew  product $T_f(x,i)=(Tx,i+f(x))$  into the closed subgroup of $\mathbb{R}$ generated by the values of $f(x)$ is ergodic.  Their methods
of proof are close to ours.

\section{Translation surfaces}\label{section - surfaces background}

In this section we will give background material on translation surfaces as well as some definitions for later use, see \cite{MR2186246, MR2186247, MR2261104} for details.  A translation surface is a $2$-dimensional manifold $M$ and a finite set of points $D = \{d_1,d_2,\dots,d_m\}$ and an open cover of $M \setminus D$ by sets
$\{U_{\alpha}\}$ together with charts $\phi_{\alpha} : U_{\alpha} \to \R2$ such that for all $\alpha$, $\beta$, with $U_{\alpha} \cap U_{\beta} \ne \emptyset$, $\phi_{\alpha} \circ \phi_{\beta}^{-1}$ is a translation on its domain of definition and at each singular point $d_i \in D$ the surface has a $2 \pi n$ cone singularity for some  integer $n \ge 1$.

The surface $M$ has a flat metric obtained by pulling back the Euclidean metric on the plane via the coordinate charts.  This metric is defined away from the set $D$.  In this metric geodesics that do not go through singularities project via the charts to straight lines in the plane in a fixed direction, and such geodesics are either periodic or simple (do not intersect themselves).  If the geodesic in direction $\theta$ starting at
$x \in M$ avoids $D$ then we define the geodesic flow $\varphi_t^\theta(x)$ to the the point obtained after moving in the direction $\theta$ for a time $t$ starting at $x$.  Since $\theta$ is usually fixed,  we will often suppress the $\theta$ dependence of the flow and simply write $\varphi_t(x)$.
If the geodesic hits $D$, then the geodesic flow $\varphi_t(x)$ is defined up to this time. The geodesic flow preserves the surface area measure 
$\mu$.

Let $\Gamma \subset SL_2(\mathbb{R})$ denote the \textit{Veech group} of the surface, the group of affine transformations (seen in $SL_2(\mathbb{R})$ as acting on the charts of $M$) which preserve the structure of $M$, and therefore permute the singularities $D$.  If $SL_2(\mathbb{R})/\Gamma$ is of finite volume (i.e.\ $\Gamma$ is a \textit{lattice}), we say that $M$ is a \textit{Veech surface}.

Throughout the article we will assume that $M$ is a compact translation surface; by \cite{1986} the geodesic flow is (uniquely) ergodic in almost every $\theta$.  We will form \textit{skew products} over these flows in the following manner: let $G$ be a locally compact Abelian group with translation-invariant metric $\| \cdot \|$, whose identity element we denote $0$, with Haar measure $\nu$, and for a finite collection of geodesic path segments $\{\gamma_i\}$ (disjoint without loss of generality) whose union we denote $\gamma$, let $f:\gamma \rightarrow G$ be constant on each $\gamma_i$ and $0$ elsewhere.  Given a point $x \in M$ and a direction $\theta$ which is not parallel to any $\gamma_i$, denote
\[ S_t(x) = \sum_{\substack{ 0 \leq s <t \\ f(\varphi_s(x)) \neq 0}}f(\varphi_s(x)),\]
\[S_t^{-1}(g) = \left\{x \in M : S_t(x) = g\right\},\quad B_{\epsilon}(g) = \left\{ y \in G : \|y - g\|<\epsilon\right\}.\]

Then the transformation $\tilde{\varphi}_t$, from $\tilde{M}=M \times G$ to itself, is given by
\[\tilde{\varphi}_t(x,g) = (\varphi_t(x),g+S_t(x)),\] and it is this flow which we study; $\tilde{\varphi}_t$ preserves the measure $\tilde{\mu}=\mu \times \nu$.  An \textit{essential value} of a general skew product $\{\tilde{M}, \tilde{\mu}, \tilde{\varphi}_t\}$ is some $g \in G$ such that for every $\epsilon>0$ and $A \subset M$ with $\mu(A)>0$, there is some $t>0$ such that
\[ \mu\left(A \cap \varphi_t(A) \cap S_t^{-1}(B_{\epsilon}(g)) \right) >0.\]
We denote by $E(\tilde{\varphi}_t)$ the union of all essential values.  Then if $E(\tilde{\varphi}_t)$ is nonempty, it is a closed subgroup of $G$, and $\{\tilde{M},\tilde{\varphi}_t\}$ is ergodic if and only if $E(\tilde{\varphi}_t) = G$ and $\{M, \varphi_t\}$ is ergodic (see \cite{A} for example).  If $0 \in E(\tilde{\varphi}_t)$, then the system is called \textit{recurrent}.

\begin{definition}\label{definition - quasirigidity sets} We will call a sequence of sets $C_n$ a \textit{quasi-rigidity sequence of sets} in the direction 
$\theta$ if there is a sequence $t_n \rightarrow \infty$, a sequence $\epsilon_n \rightarrow 0$ and a fixed $\epsilon>0$ such that
\begin{itemize}
\item $\mu(C_n) \geq \epsilon,$
\item $\forall x \in C_n$, $\|\varphi_{t_n}(x) - x\| \leq \epsilon_n$, and
\item $\mu(C_n \triangle \varphi_{1}C_n) \leq \epsilon_n$.
\end{itemize}
That is, each set is of nontrivial measure, the map $\varphi_{t_n}$ acts almost like the identity on these sets, and they are nearly invariant under the flow $\varphi_1$ (the use of $\varphi_1$ is arbitrary; any fixed positive time would do).
\end{definition}

\begin{lemma}\label{lemma - quasiperiodic citation}
Suppose that $\{M, \varphi_t\}$ is ergodic and that for some $g \in G$ we have a quasi-rigidity sequence of sets $C_n$ such that
\[ \lim_{n \rightarrow \infty}\sup_{x \in C_n} \left\| S_{t_n}(x) - g\right\| = 0.\]  Then $g \in E(\tilde{\varphi}_t)$.
\begin{proof}
This is merely a restatement for continuous flows of \cite[Cor. 2.6]{conze-fraczek}.  In that reference, it is assumed that $G=\mathbb{R}^d$, but the techniques apply to any locally compact Abelian group with Haar measure and translation-invariant metric, as we have assumed.  A similar remark appears as \cite[Thm. 11]{hubert-weiss}; the notion of restricting a search for essential values to certain subsets whose measure is bounded away from zero is a common step in the study of periodic group extensions of finite-measure systems.
\end{proof}
\end{lemma}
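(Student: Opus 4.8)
The plan is to verify the defining property of an essential value directly: given any $A\subset M$ with $\mu(A)>0$ and any $\epsilon>0$, I would produce a single time (namely $t=t_n$ for $n$ large) for which the relevant triple intersection has positive measure. Since $\sup_{x\in C_n}\|S_{t_n}(x)-g\|\to0$, for all large $n$ we have $C_n\subseteq S_{t_n}^{-1}(B_\epsilon(g))$, so the cocycle requirement is automatically met on $C_n$ and the whole problem reduces to showing that for some large $n$
\[\mu\bigl(A\cap\varphi_{-t_n}(A)\cap C_n\bigr)>0.\]
I use $\varphi_{-t_n}(A)$ rather than $\varphi_{t_n}(A)$; because $E(\tilde\varphi_t)$ is a group this change of sign is immaterial, and the forward rigidity $\|\varphi_{t_n}(x)-x\|\le\epsilon_n$ on $C_n$ is what makes this version convenient. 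The argument then splits into two independent tasks: (i) show that $A$ meets $C_n$ in positive, uniformly bounded-below measure, and (ii) upgrade this to the triple intersection using the rigidity.

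For (i) --- which I expect to be the main obstacle, since $A$ is arbitrary while the $C_n$ are fixed-measure sets about which we know little --- I would exploit the near-invariance of $C_n$ under the flow. The sequence $\mathbf 1_{C_n}$ is bounded in $L^2(\mu)$, so I pass to a subsequence with $\mathbf 1_{C_{n_k}}\rightharpoonup h$ weakly and $\mu(C_{n_k})\to c\ge\epsilon$. The hypothesis $\mu(C_n\triangle\varphi_{s_0}C_n)\le\epsilon_n\to0$ for a fixed time $s_0$ (which Definition \ref{definition - quasirigidity sets} allows us to choose) gives $\|\mathbf 1_{C_{n_k}}\circ\varphi_{s_0}-\mathbf 1_{C_{n_k}}\|_{L^2}\to0$, so the weak limit satisfies $h\circ\varphi_{s_0}=h$. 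Choosing $s_0$ so that the time-$s_0$ map is ergodic --- possible because an ergodic flow has ergodic time-$s$ maps for all but countably many $s$ --- forces $h\equiv c$. Testing the weak convergence against $\mathbf 1_A$ then yields $\mu(A\cap C_{n_k})\to c\,\mu(A)$, so that $\mu(A\cap C_{n_k})\ge\delta:=\tfrac12 c\,\mu(A)>0$ for all large $k$.

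For (ii) the difficulty is that rigidity controls how far points move, whereas $\mathbf 1_A$ is discontinuous, so pointwise proximity does not immediately preserve membership in $A$. I would circumvent this by approximation: fix $\tau>0$ and a continuous $\psi$ on the compact surface with $\|\mathbf 1_A-\psi\|_{L^1}<\tau$ and modulus of continuity $\omega$. Writing $E_k:=A\cap C_{n_k}$, so that $E_k\subseteq A$ and $\mu(E_k)\ge\delta$, measure-preservation of the flow bounds $\int_{E_k}|\mathbf 1_A-\psi|\circ\varphi_{t_{n_k}}\,d\mu\le\|\mathbf 1_A-\psi\|_{L^1}<\tau$, while uniform continuity together with $\|\varphi_{t_{n_k}}(x)-x\|\le\epsilon_{n_k}$ on $C_{n_k}\supseteq E_k$ gives $\int_{E_k}\psi\circ\varphi_{t_{n_k}}\,d\mu\ge\int_{E_k}\psi\,d\mu-\omega(\epsilon_{n_k})$. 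Chaining these estimates produces
\[\mu\bigl(\{x\in E_k:\varphi_{t_{n_k}}(x)\in A\}\bigr)=\int_{E_k}\mathbf 1_A\circ\varphi_{t_{n_k}}\,d\mu\ge\mu(E_k)-2\tau-\omega(\epsilon_{n_k}).\]
Taking $\tau=\delta/4$ and $k$ large enough that $\omega(\epsilon_{n_k})<\delta/4$ makes the right-hand side positive. The resulting set is exactly $A\cap\varphi_{-t_{n_k}}(A)\cap C_{n_k}$, which lies inside $A\cap\varphi_{-t_{n_k}}(A)\cap S_{t_{n_k}}^{-1}(B_\epsilon(g))$; this exhibits $g$ as an essential value and finishes the proof. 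The only genuinely delicate point is (i), where ergodicity must be converted into the statement that the near-invariant sets $C_n$ are equidistributed enough to meet every set of positive measure.
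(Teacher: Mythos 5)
The paper does not actually prove this lemma: its ``proof'' is a citation to \cite[Cor.~2.6]{conze-fraczek}, with the remark that the discrete-time, $\mathbb{R}^d$-valued argument there transfers to flows and to general locally compact Abelian $G$. What you have written is essentially a reconstruction of that cited argument --- weak $L^2$ limits of the indicators of the almost-invariant sets, ergodicity to force the limit to be constant, and a continuous approximation of $\mathbf{1}_A$ to convert the pointwise rigidity $\|\varphi_{t_n}(x)-x\|\le\epsilon_n$ into the measure estimate on the triple intersection. So in substance you follow the same route the authors delegate to the literature, just written out in full; your step (ii) is correct, and your identification of step (i) as the crux is exactly right. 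Two small remarks on step (ii): the inequality $\int_{E_k}\psi\circ\varphi_{t_{n_k}}\,d\mu\ge\int_{E_k}\psi\,d\mu-\omega(\epsilon_{n_k})$ implicitly normalizes $\mu(M)=1$ and takes $0\le\psi\le1$ (both harmless), and your dismissal of the sign discrepancy ($\varphi_{-t_n}(A)$ versus $\varphi_{t_n}(A)$) by invoking ``$E(\tilde{\varphi}_t)$ is a group'' is mildly circular, since the group structure is itself derived from the definition; the honest justification is the standard fact that the two formulations of essential value define the same set.

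The one step that does not match the stated hypotheses is in (i). Definition \ref{definition - quasirigidity sets} gives near-invariance only under the time-one map, $\mu(C_n\triangle\varphi_1 C_n)\le\epsilon_n$, so your weak limit $h$ is only $\varphi_1$-invariant; ergodicity of the flow does not imply ergodicity of $\varphi_1$ (the time-one map fails to be ergodic precisely when the flow has a nonzero integer eigenvalue), so you cannot conclude $h\equiv c$ from what is given. Your fix --- choosing $s_0$ with $\varphi_{s_0}$ ergodic --- silently assumes $\mu(C_n\triangle\varphi_{s_0}C_n)\to0$ for that particular $s_0$, which the definition does not supply: the parenthetical ``any fixed positive time would do'' describes how the definition could alternatively have been phrased, not a property that a sequence satisfying it for time one automatically enjoys for every other time (a union of thin slabs transverse to the flow, spaced at unit intervals, is exactly $\varphi_1$-invariant but far from $\varphi_{1/2}$-invariant). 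The gap is harmless in this paper, because the sets actually fed into the lemma (Lemma \ref{lemma - our quasirigidity sets}) are subcylinders satisfying $\mu(C_k\triangle\varphi_s C_k)\le 2E_{i_k}\alpha_{i_k,j}$ uniformly for $s\in[0,1]$, so the weak limit is invariant under the entire flow and flow-ergodicity suffices; but to prove the lemma as literally stated you should either record that stronger, all-times near-invariance as a hypothesis or note explicitly that it holds in every application you make.
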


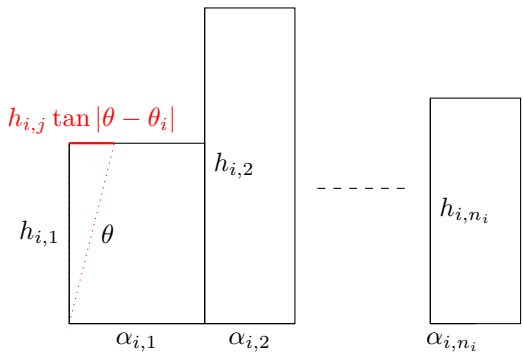
\begin{figure}[bh]
\center{
\begin{tikzpicture}[scale=.6]
\draw (0,0) rectangle (3,4);
\draw (0,0) -- node[below]{$\alpha_{i,1}$} (3,0);
\draw (0,0) -- node[left]{$h_{i,1}$} (0,4);
\draw[dotted] (0,0) -- (0,4);
\node[right] at (0.5,2){$\theta$};
\draw[thick, red] (0,4) -- node[above]{$h_{i,j} \tan |\theta - \theta_i|$} (1,4);
\draw[dotted, red] (0,0)--(1,4);

\draw (3,0) rectangle (5,7);
\draw (3,0) -- node[below]{$\alpha_{i,2}$} (5,0);
\draw (3,0) -- node[right]{$h_{i,2}$}(3,7);
\draw[dashed] (5.5,3) -- (7.5,3);
\draw (8,0) rectangle (10,5);
\draw (8,0) -- node[below]{$\alpha_{i,n_i}$} (9,0);
\draw (8,0) -- node[right]{$h_{i,n_i}$} (8,5);
\end{tikzpicture}}
\caption{\label{figure - cylinders}Decomposition of $\{M, \varphi_t\}$ into periodic cylinders in direction $\theta_i$ of widths $\alpha_{i,j}$ and heights $h_{i,j}$; the top and bottom of each rectangle are identified, and $\varphi_t$ flows vertically at unit speed.  The error compared to the flow in direction $\theta$ relative to the width of the cylinders is bounded for periodic approximations, bounded away from zero for fairly good periodic approximations, and converging to zero for good periodic approximations.}
\end{figure}

\begin{definition}\label{definition - periodic cylinders}
If the geodesic flow $\varphi_t$ in a particular direction $\theta$ decomposes the space $M$ into a finite union of sets on which $\varphi_t$ is periodic (see Figure \ref{figure - cylinders}), the direction $\theta$ is said to \textit{admit a representation by periodic cylinders}.  Given a sequence $\{\theta_i\}$ of directions which admit a representation by periodic cylinders, denote the cylinders by $A_{i,j}$, the width of $A_{i,j}$ by $\alpha_{i,j}$, and the height of each cylinder (i.e.\ the period of the geodesic flow in this cylinder) by $h_{i,j}$.

The quantity
\[E_i= \max\left\{\frac{h_{i,j} \tan|\theta - \theta_i|}{\alpha_{i,j}}: j=1,2,\ldots,n_i \right\}\]
will be called the \textit{error of the approximation $\theta_i$}.  If $ \theta_i \rightarrow \theta$ and $E_i \in O(1)$, we  say that $\{\theta_i\}$ is a \textit{sequence of periodic approximations} to the flow in the direction $\theta$.  Note that if $\theta_i \rightarrow \theta$ and $\theta$ does \textit{not} admit a representation by periodic cylinders (and only countably many directions, those of \textit{saddle connections}, i.e.\ orbit segments which begin and end at singular points, may have such a representation), then necessarily we have
\[\lim_{i \rightarrow \infty} \min_j h_{i,j} = \infty.\]
If furthermore $E_i \rightarrow 0$ we  call the sequence $\{\theta_i\}$ a \textit{good sequence of periodic approximations}.  If there is some $\delta>0$ such that for all $i$ we have $E_i \in (\delta,1/2)$ we will call the sequence a \textit{fairly good sequence of periodic approximations}.  See again Figure \ref{figure - cylinders}.  If $\inf \left\{\mu(A_{i,j})= h_{i,j} \cdot \alpha_{i,j} : i =1,2,\ldots, \, j=1,2,\ldots,n_i \right\} > 0$, we will say that this sequence \textit{has cylinders of comparable measures}.
\end{definition}

\begin{lemma}\label{lemma - our quasirigidity sets}
Suppose that $\theta$ admits a sequence of good periodic approximations $\{\theta_i\}$ with cylinders of comparable measures, and let $\{i_k\}$ be an increasing sequence of positive integers.  For some fixed $\delta>0$, consider a sequence of \textit{subcylinders} $C_k$; connected subsets of the cylinder $A_{i_k,j}$ (for some $j$) which are invariant under the flow in direction $\theta_{i_k}$, and whose measure is at least $\delta \mu(A_{i_k,j})$.  Assume further that the boundary of each $C_k$ is separated from the boundary of $A_{i_k,j}$ by a distance of at least $\delta \alpha_{i_k,j}$.  Then the $\{C_k\}$ form a quasi-rigidity sequence of sets.
\begin{proof}
That $\mu(C_k)$ is bounded away from zero follows from the condition that the cylinders have comparable measures.  As the sequence $\theta_i$ was taken to be a sequence of \textit{good} periodic approximations, the relative error converges to zero.  It now follows from the fact that each $C_k$ is bounded away from the boundary of $A_{i_k,j}$ that for each $x \in C_k$, the orbit through time $h_{i_k,j}$ of $x$ remains in $A_{i_k,j}$, from which it follows that
\[ \| \varphi_{h_{i_k,j}}(x)-x\| \leq E_{i_k} \alpha_{i_k,j}\xrightarrow{k \rightarrow \infty} 0.\]
Finally, we have
\[ \mu(\varphi_1 C_k \triangle C_k) \leq 2 \cdot E_{i_k} \alpha_{i_k,j}\] (the difference is exactly those $x$ so close to the boundary of $C_k$ that they flow out of the subcylinder in unit time), which certainly converges to zero.
\end{proof}
\end{lemma}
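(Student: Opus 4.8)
The plan is to verify directly the three defining properties of a quasi-rigidity sequence from Definition \ref{definition - quasirigidity sets}, taking the rigidity times to be the cylinder periods $t_k = h_{i_k,j}$ and the error parameters $\epsilon_k$ to be a fixed multiple of $E_{i_k}\alpha_{i_k,j}$. The lower bound on measure is the easiest: since each $C_k$ occupies at least a $\delta$-proportion of $A_{i_k,j}$ and the approximations have cylinders of comparable measures, one gets $\mu(C_k) \geq \delta\,\mu(A_{i_k,j}) \geq \delta \cdot \inf_{i,j}\mu(A_{i,j}) =: \epsilon > 0$, which is the required fixed lower bound.

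For the other two properties I would pass to the natural coordinates on $A_{i_k,j}$, writing a point as $(u,v)$ with $u \in [0,\alpha_{i_k,j}]$ the horizontal (width) coordinate and $v \in \onetorus = \mathbb{R}/h_{i_k,j}\mathbb{Z}$ the coordinate along the periodic $\theta_{i_k}$-direction. In these coordinates $\varphi_t$ in direction $\theta$ displaces a point by $(t\sin\psi,\, t\cos\psi)$, with $\psi = |\theta - \theta_{i_k}|$, for as long as the orbit stays inside the cylinder, and the invariance and connectedness hypotheses force $C_k = I\times\onetorus$ for an interval $I$ whose endpoints lie at distance at least $\delta\alpha_{i_k,j}$ from $\{0,\alpha_{i_k,j}\}$.

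The rigidity estimate then comes from flowing for the full period $t_k = h_{i_k,j}$. The horizontal drift accumulated over this time is $h_{i_k,j}\tan\psi$, which by the definition of the error is at most $E_{i_k}\alpha_{i_k,j}$; since $E_{i_k}\to 0$ for a good sequence of approximations, eventually $E_{i_k}<\delta$, so the buffer of width $\delta\alpha_{i_k,j}$ guarantees that the orbit of every $x\in C_k$ remains inside $A_{i_k,j}$, the coordinates above stay valid, and the point returns essentially to its own vertical fiber (the vertical displacement being absorbed by the identification of $\onetorus$, the residual shortfall being of lower order in $\psi$). This yields $\|\varphi_{h_{i_k,j}}(x)-x\|\leq E_{i_k}\alpha_{i_k,j}$. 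For the near-invariance, flowing for unit time shifts $I$ horizontally by $\sin\psi \leq \tan\psi$ while the vertical shift is again absorbed by the circle, so $\varphi_1 C_k = (I+\sin\psi)\times\onetorus$ and the symmetric difference is a pair of thin horizontal slivers of total width at most $2\sin\psi$ and height $h_{i_k,j}$, giving $\mu(C_k\triangle\varphi_1 C_k)\leq 2\,h_{i_k,j}\tan\psi \leq 2E_{i_k}\alpha_{i_k,j}$.

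It remains to confirm $t_k\to\infty$ and $\epsilon_k\to 0$. The first holds because a generic $\theta$ does not admit a representation by periodic cylinders, whence $\min_j h_{i,j}\to\infty$ as recorded in Definition \ref{definition - periodic cylinders}; the second holds because $E_{i_k}\to 0$ while $\alpha_{i_k,j}=\mu(A_{i_k,j})/h_{i_k,j}\leq \mu(M)/h_{i_k,j}\to 0$, so $E_{i_k}\alpha_{i_k,j}\to 0$. The one genuinely delicate point, and the step I would treat most carefully, is the geometric bookkeeping showing that the orbit segment of length $h_{i_k,j}$ never leaves the single cylinder $A_{i_k,j}$: only then are the product coordinates and the two displacement bounds legitimate, and this is precisely where the good-approximation hypothesis ($E_{i_k}\to 0$) and the boundary-separation hypothesis (the $\delta\alpha_{i_k,j}$ buffer) must be combined.
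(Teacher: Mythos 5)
Your proposal is correct and follows essentially the same route as the paper's proof: the measure bound from comparability of cylinders, the displacement bound $\|\varphi_{h_{i_k,j}}(x)-x\|\le E_{i_k}\alpha_{i_k,j}$ from the drift estimate combined with the $\delta\alpha_{i_k,j}$ buffer, and the $2E_{i_k}\alpha_{i_k,j}$ bound on the symmetric difference from the two thin slivers. You merely make the cylinder coordinates and the lower-order vertical correction explicit where the paper leaves them implicit.
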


\begin{definition}
Suppose that the geodesic flow in direction $\theta$ admits a representation by a single periodic cylinder.  Then $\theta$ will be called a \textit{single-cylinder direction}.
\end{definition}

Single-cylinder directions are of great convenience in studying translation surfaces.  It was shown in \cite{hubert-lelievre} that if $M$ is square-tiled with a prime number of tiles and in $\mathcal{H}(2)$ (genus two with a single singularity), then $M$ has a single-cylinder direction.  In \cite[Cor. A.2]{mcmullen} McMullen extended this result  to all square-tiled surfaces in $\mathcal{H}(2)$.  
Kontsevich and Zorich showed that  there is a dense set of square tiled surfaces in any stratum with single cylinder directions \cite[Lemma 18]{KZ}.
Finally Lanneau and Nguyen have identified certain substrata of $\mathcal{H}(4)$ and $\mathcal{H}(6)$ which have single cylinder directions \cite{L}.
See Appendix \ref{section - make one cylinder directions} for a general technique for constructing square-tiled surfaces with a single-cylinder direction.

The following is a standard argument in the study of translation surfaces, but hardly elementary:
\begin{lemma}\label{lemma - approximate full measure of directions}
Suppose that $\{M, \varphi_t\}$ has at least one direction $\theta'$ which admits a representation by $N$ periodic cylinders, and suppose that $M$ is a Veech surface with Veech group $\Gamma$.  Then for almost every direction $\theta$, there is a sequence $\{\theta_i\}$ of directions, chosen from the directions $\Gamma (\theta')$, which are a good sequence of periodic approximations, with $N$ cylinders of comparable measures, to the flow in the direction $\theta$.  Furthermore, for almost every direction $\theta$ there is a sequence $\{\theta_i\}$ taken from the directions $\Gamma (\theta')$ of fairly good periodic approximations with $N$ cylinders of comparable measures.
\begin{proof}
Any element $\gamma \in \Gamma$ will transform the flow in direction $\theta'$ to a flow in another direction which must necessarily also admit a representation by periodic cylinders.  The number of cylinders and their measures are not changed by the action of $\gamma \in SL(2,\mathbb{R})$, so all that must be shown is that for almost every $\theta$, a sequence may be chosen out of this orbit which are a \textit{good} or \textit{fairly good} sequence of periodic approximations to the flow in the direction of $\theta$.  

The assumption that $M$ is a Veech surface ensures that $\Gamma$, the Veech group, is \textit{Fuchsian of the first kind}.  The fact that almost-every direction has a sequence of good approximations coming from the orbit under $\Gamma$ of the single-cylinder direction follows directly from \cite[\S9]{Patterson} (see \cite{MR688349} for a more general case).  We state the application of this theorem  to our setting in the language of
this article: for every  $\varepsilon > 0$, there exists a set
of full measure $\Theta(\varepsilon) \subset \mathbb{S}^1$ such that for any $\theta \in \Theta(\varepsilon)$, there exists a sequence of $N$
cylinder directions with slopes $p_n/q_n$ such that $|\theta - p_n/q_n| \le \varepsilon/q_n^2$. Similarly as in \cite{HLT},  the conclusions for good
approximations follow from Patterson's theorem.  The appropriate lower bound for fairly good approximations  follows from the ergodicity techniques used to prove similar statements in \cite[\S 2.2]{hubert-weiss}: the orbit of a completely periodic direction under $\Gamma$ will intersect arbitrary sets of positive measure infinitely often in $SL_2(\mathbb{R})/\Gamma$.  This quotient is of finite volume and has finitely many cusps by assumption, so a nontrivial annular neighborhood around a generic slope will also contain infinitely many points from the orbit $\Gamma (\theta'$); the use of a fixed annulus as opposed to a sequence of shrinking neighborhoods distinguishes fairly good approximations from good approximations.
\end{proof}
\end{lemma}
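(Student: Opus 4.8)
The plan is to combine the $SL_2(\mathbb{R})$-equivariance of the cylinder decomposition with a Diophantine approximation statement for the boundary action of the Fuchsian group $\Gamma$. First I would record the elementary half. If $\gamma \in \Gamma$, then $\gamma$ carries the geodesic flow in direction $\theta'$ to the flow in the direction $\gamma\cdot\theta' \in \Gamma(\theta')$, and since $\gamma$ is realized by an element of $SL_2(\mathbb{R})$ it is area-preserving and sends cylinders to cylinders. Hence every direction in the orbit $\Gamma(\theta')$ is represented by exactly $N$ periodic cylinders whose areas $\mu(A_{i,j}) = h_{i,j}\,\alpha_{i,j}$ range over the same finite list of values as in direction $\theta'$. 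In particular the requirement of \emph{$N$ cylinders of comparable measures} holds automatically for any subsequence drawn from $\Gamma(\theta')$, and the entire problem reduces to selecting $\theta_i \in \Gamma(\theta')$ whose error $E_i$ has the desired asymptotics.

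Next I would set up the standard dictionary between directions and the boundary of the hyperbolic plane. Identifying $\onetorus$ (equivalently, slopes) with $\partial\mathbb{H} \cong \mathbb{RP}^1$, the Veech group acts as a Fuchsian group, and the Veech assumption makes $\Gamma$ a lattice, hence Fuchsian of the first kind with limit set all of $\partial\mathbb{H}$. The periodic direction $\theta'$ is a parabolic fixed point, so $\Gamma(\theta')$ is a full orbit of cusps, which are precisely the cylinder directions. The key quantitative input is the scaling of the cylinder data along this orbit: a direction of slope $p/q$ in lowest terms has shortest closed geodesic (cylinder circumference, equal to the period) of length $h_{i,j} \asymp q$, and transverse width $\alpha_{i,j} \asymp 1/q$ by the area normalization, so that $h_{i,j}/\alpha_{i,j} \asymp q^2$ and therefore
\[ E_i \asymp q_i^2\,|\theta - \theta_i| \asymp q_i^2\,\left|\theta - \tfrac{p_i}{q_i}\right|. \]
Under this dictionary, $E_i \to 0$ is equivalent to $|\theta - p_i/q_i| = o(1/q_i^2)$ (a good approximation), whereas $E_i \in (\delta,1/2)$ is equivalent to $|\theta - p_i/q_i| \asymp 1/q_i^2$ with the implied constant confined to a fixed range (a fairly good approximation). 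The step I expect to be the main obstacle lives here: the honest verification of this scaling uniformly along the orbit, which amounts to tracking how the bounded collection of cylinder shapes in direction $\theta'$ is distorted by $\gamma$ and to checking that the comparability constants do not degenerate along the chosen subsequence.

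The good-approximation half then follows from Patterson's theorem on Diophantine approximation by orbits of Fuchsian groups of the first kind \cite[\S9]{Patterson}: for every $\varepsilon > 0$ there is a full-measure set of $\theta$ admitting infinitely many cusps $p_n/q_n \in \Gamma(\theta')$ with $|\theta - p_n/q_n| \le \varepsilon/q_n^2$, and a Borel--Cantelli/diagonal argument (exactly as in \cite{HLT}) upgrades this to a single full-measure set on which one may take $\varepsilon = \varepsilon_n \to 0$. Through the scaling above this produces, for almost every $\theta$, a subsequence with $E_{i_n} \to 0$, that is, a good sequence of periodic approximations; by the first paragraph it automatically has $N$ cylinders of comparable measures.

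For fairly good approximations I would instead use ergodicity of the geodesic flow on the finite-volume quotient $SL_2(\mathbb{R})/\Gamma$, following \cite[\S2.2]{hubert-weiss}. Because $\Gamma$ has finite covolume and finitely many cusps, one fixes once and for all a non-shrinking annular neighborhood of the cusp $\theta'$ (equivalently, a positive-measure target set in the quotient) corresponding to the scale $|\theta - p/q| \asymp 1/q_n^2$ with $E_i$ confined to $(\delta,1/2)$; ergodicity, indeed mere recurrence to a set of positive measure, guarantees that for almost every $\theta$ the associated geodesic returns to this set infinitely often, producing infinitely many orbit cusps at exactly this fixed quality. The crucial contrast with the good case is that the annulus is held fixed rather than allowed to shrink, which is precisely what keeps $E_i$ bounded away from $0$. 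Once the scaling dictionary of the second paragraph is in place, both halves of the lemma are immediate consequences of these cited Diophantine and ergodic mechanisms.
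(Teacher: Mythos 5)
Your proposal is correct and follows essentially the same route as the paper: reduce to the error asymptotics via $SL_2(\mathbb{R})$-equivariance of the cylinder decomposition, invoke Patterson's theorem (with a diagonal argument over $\varepsilon$) for good approximations, and use recurrence to a fixed annular cusp neighborhood in the finite-volume quotient for fairly good ones. Your explicit scaling dictionary $E_i \asymp q_i^2\,|\theta - p_i/q_i|$ is a useful elaboration of what the paper leaves implicit, but it is not a different argument.
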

In particular, if $M$ is a Veech surface, then if the flow in direction $\theta'$ is not periodic the surface admits a representation by periodic cylinders of comparable measures with whatever strength in approximation we desire.  The conclusion of Lemma \ref{lemma - approximate full measure of directions} can in fact be strengthened to state that the set of exceptional directions (those which cannot be approximated to the desired accuracy from the orbit under $\Gamma$ of any given single-cylinder direction) is not just of measure zero, but indeed of Hausdorff dimension less than one.  See again \cite{hubert-weiss} for similar statements; we state only that almost-every direction can be so approximated in the interest of simplicity and generic appeal.

\section{Generic Constructions}\label{section - generic}

\begin{definition}\label{definition - self avoiding}
Let $\theta$, the direction of the geodesic flow, be fixed such that $\{M, \varphi_t\}$ admits a sequence of periodic approximations $\{\theta_i\}$.  Let $\tilde{D}$ denote an arbitrary finite subset of $M$.  Then a point $x \in (\tilde{D} \setminus D)$ is said to \textit{avoid $D \cup \tilde{D}$ in direction $\theta$} if for some fixed $\epsilon>0$ there is a sequence $\{\epsilon_i\}$ so that each of the following holds:
\begin{itemize}
\item for each $i$, $x$ belongs to the interior of some $A_{i,j}$,
\item the $\epsilon_i$-neighborhood around the $h_{i,j}\sec |\theta_i - \theta |$ orbit of $x$ \textit{in the direction of $\theta$} remains in $A_{i,j}$ (restricted to those $j$ for which $x \in A_{i,j}$) and does not intersect $D \cup (\tilde{D} \setminus x)$, and 
\item $\inf_{i \rightarrow \infty} \epsilon_i h_{i,j} \geq \epsilon$.
\end{itemize}
A sequence of neighborhoods $C_i$ for which $x$ satisfies this definition for the same $\epsilon>0$ will be called a \textit{sequence of orbit-neighborhoods for $x$}.  See Figure \ref{figure - subcylinders} for an illustration (the sets $C_{i,1}$ and $C_{i,2}$ will be defined later).  If every point in $\tilde{D}$ avoids $D \cup \tilde{D}$, and if $D \cap \tilde{D} = \emptyset$, then we will say that $\tilde{D}$ is a \textit{self-avoiding set in direction $\theta$}.
\end{definition}

Note that we merely need \textit{some} sequence of periodic approximations for each $\theta$; in situations where a canonical method for finding approximations exists, a subsequence of this canonical set suffices.

\begin{proposition}\label{prop - avoiding points generic in measure}
Suppose $\theta$ has a good sequence of periodic approximations and fix $P \in \mathbb{N}$.  Then almost every element $\tilde{D} \in M^P$ (with respect to the product measure) is a self-avoiding set in the direction $\theta$.
\begin{proof}
It suffices to show that the proposition is true for $P=1$; larger sets may then be inductively constructed one point at a time by presuming $x_1$ through $x_{P-1}$ are in $D$ for the purposes of finding $x_P$.  Let $\theta$ be fixed such that $\{\theta_i\}$ is a good sequence of periodic approximations.  For a fixed $\epsilon>0$, let $B_i = B_i(\epsilon)$ be the complement of the $\epsilon'$-neighborhood of the boundaries of the cylinders, where
\[ \epsilon' = \frac{\epsilon \min_j \{\alpha_{i,j}\}}{2},\] so that $\mu(B_i) \geq (1-\epsilon)$.  See Figure \ref{figure - shaved cylinders}.  Now note that if $x$ belongs to infinitely many $B_i(\epsilon)$, it is self-avoiding singleton: for sufficiently large $i$ we have the error $E_i<\epsilon$.  As
\[ \mu \left( \limsup_{i \rightarrow \infty} B_i \right) \geq \limsup_{i \rightarrow \infty} \mu(B_i),\] for this choice of $\epsilon$ the set of self-avoiding singletons in direction $\theta$ is of measure at least $(1-\epsilon)$, and $\epsilon$ was arbitrary.
\end{proof}
\end{proposition}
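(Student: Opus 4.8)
The plan is to reduce the statement to the case $P=1$ and then to realise the self-avoiding points as a $\limsup$ of explicitly constructed good sets, the essential device being a reverse Fatou (upper measure) estimate rather than summability. For the reduction, observe that the diagonal coincidences $\{x_k=x_\ell\}$ and the events $\{x_k\in D\}$ are $\tilde\mu$-null in $M^P$ (the area measure on $M$ being non-atomic and $D$ finite), so $D\cap\tilde D=\emptyset$ automatically for almost every tuple, and it remains to show for each coordinate $k$ that $x_k$ avoids $D\cup(\tilde D\setminus x_k)$ for almost every tuple. Fixing $k$, freezing the other coordinates, and applying Fubini reduces this to a single-point statement for an \emph{arbitrary} finite obstacle set: for any finite $\tilde D'\subset M$, almost every $x\in M$ avoids $D\cup\tilde D'$ in direction $\theta$. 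A finite union over $k$ of null exceptional sets being null, this base case implies the proposition.

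For the base case I fix $\epsilon>0$ and at each level $i$ delete two families of points. First, from each cylinder $A_{i,j}$ I remove the $\tfrac12\epsilon\alpha_{i,j}$-collar of its boundary, of area $\epsilon\,\mu(A_{i,j})$, so the collars have total measure $\epsilon$; once $E_i<\tfrac12\epsilon$ the forward $\theta$-orbit of length $h_{i,j}\sec|\theta_i-\theta|$ of any surviving point drifts laterally by $E_i\alpha_{i,j}<\tfrac12\epsilon\alpha_{i,j}$, hence stays inside its own cylinder and misses the singularities $D$ carried by the separatrices. Second, thickening the orbit of a point of $A_{i,j}$ by $\epsilon_i=\epsilon/h_{i,j}$, I remove for each obstacle $p\in\tilde D'$ those $x$ whose tube comes within $\epsilon_i$ of $p$; inside the single cylinder containing $p$ these form a nearly vertical strip of transverse width $\asymp\epsilon_i$ and full height $h_{i,j}$, of measure $\asymp\epsilon_i h_{i,j}=\epsilon$, so the $|\tilde D'|$ obstacles cost at most $C\epsilon$ with $C$ depending only on $|\tilde D'|$. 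Writing $G_i$ for the complement, one has $\mu(G_i)\ge 1-(C+1)\epsilon$ for all large $i$; moreover $\epsilon_i h_{i,j}=\epsilon$ by construction and $\epsilon_i\to0$ (as $\min_j h_{i,j}\to\infty$ along a good sequence), so the tubes are eventually thin enough to lie inside the cylinders.

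I then apply the reverse Fatou inequality
\[
\mu\Big(\limsup_{i\to\infty}G_i\Big)\ \ge\ \limsup_{i\to\infty}\mu(G_i)\ \ge\ 1-(C+1)\epsilon .
\]
Every $x$ lying in infinitely many $G_i$ satisfies Definition~\ref{definition - self avoiding} with this $\epsilon$: along that subsequence $x$ is interior to a cylinder, its $\epsilon_i$-orbit-tube stays inside and avoids $D\cup\tilde D'$, and $\epsilon_i h_{i,j}=\epsilon$ supplies the third clause. Hence the self-avoiding singletons for the obstacle set $\tilde D'$ have measure at least $1-(C+1)\epsilon$; taking $\epsilon=1/m$ and unioning over $m$ yields a set of full measure, establishing the base case and with it the proposition.

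The crux is the second removal. Because the third clause of Definition~\ref{definition - self avoiding} forbids $\epsilon_i h_{i,j}$ from tending to $0$, each obstacle-tube has measure bounded \emph{below} by a multiple of $\epsilon$ at every scale, so the bad sets are not summable and Borel--Cantelli is unavailable. What rescues the argument is that each obstacle lies in a single cylinder and contributes just one strip of area $\asymp\epsilon_i h_{i,j}=\epsilon$, keeping the total removed measure $O(\epsilon)$ uniformly in $i$; the full-measure conclusion then flows not from summability but from the upper-measure estimate on $\limsup_i G_i$ together with letting $\epsilon\to0$, exactly as for the boundary collar.
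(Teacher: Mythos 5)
Your proof is correct and follows essentially the same route as the paper's: a reverse-Fatou (upper-measure) estimate on $\limsup_i$ of good sets of measure at least $1-O(\epsilon)$ at each scale, followed by letting $\epsilon\to 0$. The one place you go beyond the paper is in making explicit the second removal --- the $\epsilon_i$-tubes of measure $\asymp\epsilon_i h_{i,j}=\epsilon$ around the backward orbits of the obstacle points --- which the paper subsumes, rather tersely, under the instruction to ``presume $x_1,\dots,x_{P-1}$ are in $D$''; this is a worthwhile clarification, since those points, unlike the true singularities, need not lie on the cylinder boundaries and so are not handled by the boundary collar alone.
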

\begin{figure}[b]
\center{
\begin{tikzpicture}[yscale=.75]
\draw[pink, fill=pink] (0,0) rectangle (1/8,4);
\draw[pink, fill=pink] (3-1/8,0) rectangle (3,4);
\draw (0,0) rectangle (3,4);
\draw (0,0) -- node[below]{$\alpha_{i,1}$} (3,0);
\draw (0,0) -- node[right]{$h_{i,1}$} (0,4);
\draw[pink, fill=pink] (3,0) rectangle (3+1/8,7);
\draw[pink, fill=pink] (5-1/8,0) rectangle (5,7);
\draw (3,0) rectangle (5,7);
\draw (3,0) -- node[below]{$\alpha_{i,2}$} (5,0);
\draw (3,0) -- node[right]{$h_{i,2}$}(3,7);
\draw[dashed] (5.5,3) -- (7.5,3);
\draw[pink, fill=pink] (8,0) rectangle (8+1/8,5);
\draw[pink, fill=pink] (9-1/8,0) rectangle (9,5);
\draw (8,0) rectangle (9,5);
\draw (8,0) -- node[below]{$\alpha_{i,n_i}$} (9,0);
\draw (8,0) -- node[right]{$h_{i,n_i}$} (8,5);
\end{tikzpicture}
}
\caption{\label{figure - shaved cylinders}The $\epsilon$-proportion removed in constructing $B_i(\epsilon)$.}
\end{figure}
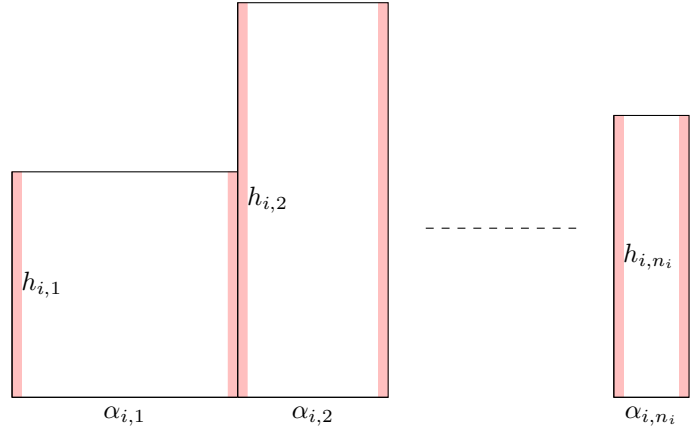

\begin{corollary}\label{Corollary - fubini trick}
Suppose that almost every $\theta$ admits a sequence of good approximations.  Then almost every subset $\tilde{D} \in M^P$ is self-avoiding in almost every direction.
\begin{proof}
We certainly have almost surely that $\tilde{D} \cap D = \emptyset$.  For a full-measure set of $\theta$, a full-measure set of $\tilde{D}$ is self-avoiding in direction $\theta$.  Apply Fubini's theorem to reverse the order of quantifiers.
\end{proof}
\end{corollary}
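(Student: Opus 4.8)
The plan is to realize the two ``almost every'' quantifiers as a single statement about a subset of the product space $\mathbb{S}^1 \times M^P$ and then exchange their order with Fubini's theorem. Normalize Lebesgue measure on $\mathbb{S}^1$ and the surface measure so that $\mu(M)=1$, and write $\mu^P$ for the product measure on $M^P$. Let $\Theta \subseteq \mathbb{S}^1$ be the full-measure set of directions admitting a good sequence of periodic approximations, guaranteed by hypothesis, and set
\[ E = \left\{ (\theta, \tilde{D}) \in \Theta \times M^P : \tilde{D} \text{ is self-avoiding in direction } \theta \right\}. \]
The goal is to show that for $\mu^P$-almost every $\tilde{D}$ the slice $E_{\tilde{D}} = \{\theta : (\theta,\tilde{D}) \in E\}$ has full measure in $\mathbb{S}^1$. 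The requirement $\tilde{D} \cap D = \emptyset$ holds $\mu^P$-almost surely because $D$ is finite, so it may be ignored throughout.

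Granting for the moment that $E$ is measurable, the argument is a direct application of Tonelli's theorem to the nonnegative indicator $\mathbf{1}_E$. For each fixed $\theta \in \Theta$, Proposition \ref{prop - avoiding points generic in measure} asserts that the slice $E_\theta = \{\tilde{D} : (\theta, \tilde{D}) \in E\}$ has full $\mu^P$-measure, so
\[ \int_{\mathbb{S}^1} \mu^P(E_\theta)\, d\theta = \int_\Theta 1 \, d\theta = 1, \]
using that $\Theta$ itself has full measure. By Tonelli this common value equals $\int_{M^P} |E_{\tilde{D}}| \, d\mu^P(\tilde{D})$, where $|E_{\tilde{D}}|$ denotes the measure of the direction-slice. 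Since $|E_{\tilde{D}}| \leq 1$ pointwise and its average over $\tilde{D}$ equals $1$, it must equal $1$ for $\mu^P$-almost every $\tilde{D}$. This says precisely that almost every $\tilde{D}$ is self-avoiding in almost every direction, which is the claim.

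The one step requiring genuine care, and the place where I expect the real work to lie, is the joint measurability of $E$ in the pair $(\theta, \tilde{D})$, without which Tonelli does not apply. I would establish it by unwinding Definition \ref{definition - self avoiding}: fixing a \emph{canonical} countable family of periodic approximations for each $\theta$ (for instance the Veech-group orbit of a completely periodic direction, as in Lemma \ref{lemma - approximate full measure of directions}), the cylinder data $A_{i,j}$ and the heights $h_{i,j}$ depend measurably on $\theta$. Following the proof of Proposition \ref{prop - avoiding points generic in measure}, the self-avoiding condition can then be encoded through the shaved cylinders $B_i(\epsilon)$ as the countable combination $\bigcup_{\epsilon \in \mathbb{Q}_{>0}} \limsup_i B_i(\epsilon)$, each piece of which is built from the measurable cylinder data by countable unions and intersections. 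Measurability of $E$ follows, and the remainder of the argument is then purely formal.
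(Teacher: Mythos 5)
Your proof is correct and follows essentially the same route as the paper's: both arguments apply Fubini/Tonelli to the set of pairs $(\theta,\tilde{D})$ with $\tilde{D}$ self-avoiding in direction $\theta$, using Proposition \ref{prop - avoiding points generic in measure} to control the $\theta$-slices. The only addition is your explicit attention to joint measurability, which the paper leaves implicit but which your sketch (reducing to countable operations on the shaved cylinders $B_i(\epsilon)$ over rational $\epsilon$) handles adequately.
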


Let $S=\{s_1,\ldots,s_n\}$ be a finite subset of $M$ (possibly including points in $D$), $L_i > 0$  and $\tau_i \in \onetorus$ ($i=1,\dots,n$) be for the
moment arbitrary.  Let $\gamma_r$ be a geodesic of length $L_r>0$ in direction $\tau_r$ originating at $s_r$. Note that we write ``a'' geodesic since if $s_r \in D$ there may be several possible such geodesics and if a geodesic originating at $s_r$ arrives at a point of $D$, there are multiple possible continuations.  Although generically this will not happen, a priori we allow all such geodesic segments. Let $a_{r,1}, \ldots, a_{r,N(r)}$ be a finite set of points in $\gamma_r$ ($N(r)$ is arbitrary but finite); assume that $a_{r,N(r)}=\varphi_{L_r}(s_r)$ (the terminus of $\gamma_r$) and set $a_{r,0}=s_r$.  Denote by $A$ the (finite) collection of all $a_{r,j}$ together with any $d \in D$ such that $d \in \gamma_i$ for some $i$.

\begin{corollary}
For fixed $n$, for almost every construction of $\gamma_r$ and $A$ (with initial points $s_i$ chosen according to a probability measure on $M$ absolutely continuous with respect to $\mu$, lengths $l_r$ chosen according to some probability measure absolutely continuous with respect to Lebesgue on $\mathbb{R}^+$ and directions $\tau_r$ by a measure absolutely continuous with respect to Lebesgue measure on $\onetorus$, and $A$ then chosen according to a measure on $\gamma$ which is absolutely continuous with respect to Lebesgue measure), the set $A$ is a self-avoiding set.
\begin{proof}
We apply Proposition \ref{prop - avoiding points generic in measure}, noting simply that we certainly have almost-surely that no $\gamma_i$ crosses a singularity $d \in D$.
\end{proof}
\end{corollary}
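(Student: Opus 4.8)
The plan is to reduce the statement to Proposition \ref{prop - avoiding points generic in measure}, but only after isolating which part of the self-avoidance condition is genuinely two-dimensional. Recall from Definition \ref{definition - self avoiding} that $A$ is self-avoiding in the fixed direction $\theta$ (which we take to admit a good sequence of periodic approximations, as in Proposition \ref{prop - avoiding points generic in measure}) precisely when $A \cap D = \emptyset$, each point $a_{r,j}$ stays uniformly away from the cylinder boundaries along the approximating directions, and each $a_{r,j}$ \emph{avoids} every other point of $A \cup D$ in the sense that its $\theta$-orbit-neighbourhoods eventually miss that point. First I would dispose of the trivial conditions: since the law of the data $(s_r,\tau_r,L_r)$ is absolutely continuous with respect to $\mu \times \mathrm{Leb} \times \mathrm{Leb}$ and the singular set $D$ is finite, almost surely no $\gamma_r$ meets $D$, so almost surely $A = \{a_{r,j}\}$ and $A \cap D = \emptyset$.

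The key point separating this corollary from Proposition \ref{prop - avoiding points generic in measure} is that the points of $A$ are \emph{not} distributed according to a product measure on $M^{P}$: the points on a single geodesic $\gamma_r$ are collinear in the direction $\tau_r$, so their joint law is supported on a positive-codimension subset and is far from absolutely continuous with respect to a product of copies of $\mu$. What survives, and what I would exploit, is absolute continuity of each \emph{individual} point. Writing $a_{r,j} = \varphi^{\tau_r}_{t_{r,j}}(s_r)$ for the appropriate time $t_{r,j} \in [0,L_r]$, the map $\varphi^{\tau_r}_{t}$ preserves $\mu$ and is locally an isometry; hence for fixed $\tau_r$ and $t$ the law of $a_{r,j}$ is absolutely continuous with respect to $\mu$, and this persists after integrating over the absolutely continuous choices of $\tau_r$ and $t_{r,j}$. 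Feeding this into the argument of Proposition \ref{prop - avoiding points generic in measure}, whose proof only uses that the sampled point has law absolutely continuous with respect to $\mu$ and which applies equally with $D$ enlarged by any fixed finite set of points, shows that almost surely each $a_{r,j}$ satisfies the cylinder-boundary condition of that proposition, avoids $D$, and avoids every point of $A$ lying on a \emph{different} geodesic.

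The main obstacle is the pairwise avoidance of two points $a_{r,j}, a_{r,j'}$ lying on the \emph{same} geodesic $\gamma_r$, where exactly the correlation just described prevents a direct appeal to Proposition \ref{prop - avoiding points generic in measure}. Here I would first record the geometric fact underlying the whole scheme: along a good sequence of approximations the orbit segments have length of order $h_{i,j} \to \infty$ while the admissible neighbourhood radius is of order $\epsilon/h_{i,j} \to 0$, so a fixed point $q$ fails to be avoided by $x$ only if $q$ lies on the bi-infinite $\theta$-geodesic through $x$; the set of such $q$ is a countable union of $\theta$-orbit segments, hence $\mu$-null. To rule out the same-geodesic coincidence I would fix all of the data except one absolutely continuous coordinate controlling one of the two points, namely the interior position $t_{r,j'}$ if $a_{r,j'}$ is an interior point, the length $L_r$ if both are endpoints, or the direction $\tau_r$, and use that, since $\tau_r \neq \theta$ almost surely, the finite-length geodesic $\gamma_r$ (or its continuation) is transverse to the $\theta$-orbit of the point held fixed and therefore meets it in only finitely many points, a $1$-dimensional null set of positions. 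Varying that coordinate over its absolutely continuous range thus places the coincidence event in a null set, so it has probability zero. A union bound over the finitely many points $a_{r,j}$ and the finitely many pairs then yields that almost every construction produces a self-avoiding $A$; combining this with Corollary \ref{Corollary - fubini trick} gives the same conclusion in almost every direction $\theta$.
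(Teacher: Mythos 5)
Your write-up is correct in substance but it is a genuinely different (and considerably more careful) argument than the one in the paper, whose entire proof is a one-line citation of Proposition~\ref{prop - avoiding points generic in measure} together with the observation that almost surely no $\gamma_i$ meets $D$. The point you isolate --- that the points $a_{r,j}$ lying on a single cut $\gamma_r$ are collinear, so the joint law of $A$ on $M^P$ is supported on a positive-codimension set and is \emph{not} absolutely continuous with respect to the product measure to which Proposition~\ref{prop - avoiding points generic in measure} literally applies --- is exactly the issue the paper's proof glosses over. Your decomposition (each point individually has law absolutely continuous with respect to $\mu$, so the singleton and different-geodesic cases follow from the proposition's argument with $D$ enlarged; the same-geodesic case is handled by conditioning on all data except one absolutely continuous coordinate and using transversality of $\gamma_r$ to the direction $\theta$) is the right way to fill this in, and it buys an actual proof where the paper offers only an assertion.

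One step is stated too strongly and should be repaired, though the repair is already implicit in your own appeal to the proposition's mechanism. It is not true that $x$ fails to avoid a fixed point $q$ only if $q$ lies on the bi-infinite $\theta$-orbit of $x$: the admissible radii $\epsilon_i$ may be as large as a constant multiple of $\alpha_{i,j}$, so a point off the orbit but approached by it faster than $\epsilon_i$ decays can also fail, and likewise the bad set on the transversal $\gamma_r$ is not just the finitely many intersection points with an orbit segment but a union of intervals around them. The correct statement is quantitative: at stage $i$ the intersection of $\gamma_r$ with the $\epsilon_i$-strip around the length-$h_{i,j}$ orbit of the fixed point consists of roughly $h_{i,j}L_r/\mathrm{Area}$ intervals of length of order $\epsilon_i/\sin\angle(\theta,\tau_r)$, for a total one-dimensional measure $O(\epsilon)$; one then takes the $\liminf$ over $i$ and lets $\epsilon\to 0$, exactly as in the proof of Proposition~\ref{prop - avoiding points generic in measure}. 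With that substitution your argument goes through.
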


Let $G$ be an Abelian locally compact group with translation invariant metric endowed with Haar measure $\nu$, and we will use the segments $\gamma_i$ to define our skew product $\{\tilde{M},\tilde{\varphi}_t\}$ as in \S \ref{section - surfaces background}.  Define a function $f: M \rightarrow G$ to be piecewise constant on each $\gamma_i$ with discontinuities a subset of $A$; denote the value of $f$ between $a_{i,j-1}$ and $a_{i,j}$ by $f_{i,j}$.  Finally, denote
\[ \sigma_{i,j} = \begin{cases} f_{i,1} & (j=0)\\ f_{i,j+1}-f_{i,j} & (j=1,2, \ldots N(i)-1)\\ -f_{i,N(i)} & (j=N(i)). \end{cases}\]
The function $f$ is defined to be identity off the segments $\gamma_i$; the values $\sigma_{i,j}$ are the `jumps' seen in $f$ while flowing along the path $\gamma_i$.

\begin{definition}\label{definition - essential point}
Let $\theta$ be fixed.  A point $x \in (\tilde{D} \setminus D)$ which avoids $(\tilde{D} \cup D)$ with orbit neighborhoods $C_{k}$ of heights $h_k \rightarrow \infty$ will be called \textit{essential in direction $\theta$} if there is some $P$ such that along an infinite subsequence of orbit neighborhoods we have
\[ \sup_{k=1,2,\ldots}\sup_{y \in C_k} \left| S_{h_k}(y) \right| \leq P.\]
\end{definition}

Showing that points are essential is in general a very demanding requirement.  Single-cylinder directions, however, provide a ready approach.

\begin{proposition}[Koksma-type inequality]\label{proposition - single cylinder avoiding points are essential}
Suppose that $M$ is a translation surface and the function $f$ is defined as above.  Assume further that $\theta'$ is a single cylinder direction, with the cylinder height given by $h$ (and width $1/h$).
Suppose that the flow in the single cylinder direction $\theta'$,  sees an average value of the identity in $G$.  
Then, for every $x$ not on the boundary of the cylinder we have
\[ \| S_{h}(x)\| \leq 4\sum_{j=1}^{N(i)}\sum_{i=1}^n \|f_{i,j}\|,\] where the sum $S_h(x)$ is with respect to the flow in the direction $\theta'$.
\begin{proof}
We denote by $q_{i,j}$ the projected length of the geodesic segment connecting $a_{i,j-1}$ to $a_{i,j}$ \textit{in the direction transverse to $\theta'$}.
That the average value of the sums in the cylinder is identity is
equivalent to
\begin{equation}\label{eqn - zero integral condition}\sum_{j=1}^{N(i)}\sum_{i=1}^n f_{i,j}q_{i,j} =0.\end{equation}
This condition is sufficient for recurrence of the flow in the event 
that $G = \mathbb{Z}$ or $\mathbb{R}$ (see \cite{schmidt2}), and is clearly necessary in all cases where $G$ contains elements of infinite order.

Given a particular segment, the points $a_{i,j-1}$ and $a_{i,j}$ are in the cylinder (possibly on the boundary), and the segment between them completely crosses the cylinder some number of times.  Denote this number by $P_{i,j}$.  There are also possibly two partial crossings corresponding to the segment leaving $a_{i,j-1}$ and arriving into $a_{i,j}$ (in $P_{i,j} = 0$ then there is only one such segment); denote by $p'_{i,j}$ and $p''_{i,j}$ respectively the lengths of these two segments when measured in the transverse direction to the flow.  We have
\[ 0 \leq p'_{i,j},p''_{i,j} < \frac{1}{h}, \quad 0 \leq q_{i,j}- \frac{P_{i,j}}{h} \leq \frac{2}{h},\]
the latter of which we rewrite as
\[ |P_{i,j}-q_{i,j}h| \leq 2.\]

As the entire space is shown in this single periodic cylinder, the entire segment is represented by these pieces, so \eqref{eqn - zero integral condition} translates to
\begin{equation}\label{eqn - zero integral translated} \sum_{j=1}^{N(i)} \sum_{i=1}^n f_{i,j}\left(\frac{P_{i,j}}{h} +p'_{i,j} + p''_{i,j}\right)=0.\end{equation}

The periodic orbit of any point $x$ in the direction $\theta'$ (ignoring the orbits containing the points $a_{i,j}$) crosses the segment from $a_{i,j-1}$ to $a_{i,j}$ a total of $P_{i,j}+\phi_{i,j}(x)$ times, where
\[ \phi_{i,j}(x) \in \left\{0,1,2 \right\},\] according to whether it crosses neither `partial segment,' one of them, or both.
Then we have
\begin{align*}
S_{h}(x) & = \sum_{j=1}^{N(i)}\sum_{i=1}^n f_{i,j}(P_{i,j}+\phi_{i,j}(x))\\
\|S_{h}(x)\| &\leq \left\| \sum_{j=1}^{N(i)}\sum_{i=1}^n f_{i,j}P_{i,j} \right\| + 2\sum_{j=1}^{N(i)}\sum_{i=1}^n \|f_{i,j}\|\\
&\leq h\left\| \sum_{j=1}^{N(i)}\sum_{i=1}^n f_{i,j}q_{i,j} \right\| + 4\sum_{j=1}^{N(i)}\sum_{i=1}^n \|f_{i,j}\|\\
&=4\sum_{j=1}^{N(i)}\sum_{i=1}^n \|f_{i,j}\|. \qedhere
\end{align*}
\end{proof}
\end{proposition}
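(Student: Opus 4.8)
The plan is to exploit the single-cylinder structure to reduce the computation of $S_h(x)$ to a crossing count that is almost independent of $x$. First I would unroll the single cylinder as a rectangle of width $1/h$ in the transverse direction and height $h$ in the direction $\theta'$, so that the flow runs vertically at unit speed with period $h$, and $S_h(x)$ is precisely the sum of the jumps $f_{i,j}$ collected as the closed orbit through $x$ meets each sub-segment $[a_{i,j-1}, a_{i,j}]$ of the transversals $\gamma_i$. Writing $\kappa_{i,j}(x)$ for the number of times this orbit meets the sub-segment between $a_{i,j-1}$ and $a_{i,j}$, we have $S_h(x) = \sum_{i,j} f_{i,j}\,\kappa_{i,j}(x)$, so everything rests on estimating $\kappa_{i,j}(x)$.

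Next I would decompose each sub-segment by its transverse extent $q_{i,j}$. Because a complete traversal of the cylinder has transverse width exactly $1/h$ and is met exactly once by every vertical orbit regardless of its horizontal position, the sub-segment contributes a number $P_{i,j}$ of complete crossings together with at most two partial crossings of transverse lengths $p'_{i,j}, p''_{i,j} < 1/h$. This yields the geometric identity $q_{i,j} = P_{i,j}/h + p'_{i,j} + p''_{i,j}$, hence $|P_{i,j} - q_{i,j}h| \le 2$, and also the crucial fact that $\kappa_{i,j}(x) = P_{i,j} + \phi_{i,j}(x)$ with $\phi_{i,j}(x) \in \{0,1,2\}$ recording whether the orbit of $x$ happens to meet neither, one, or both partial crossings. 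I expect the careful bookkeeping of this crossing count --- in particular justifying that the $P_{i,j}$ complete crossings are uniform across all orbits while only the two boundary crossings depend on $x$ --- to be the main obstacle, since it is precisely where the hypothesis that the whole surface is a single cylinder (and that $x$ avoids the boundary, so its orbit meets each transversal cleanly) is really used.

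With the count in hand the estimate is routine. I would split
\[ S_h(x) = \sum_{i,j} f_{i,j} P_{i,j} + \sum_{i,j} f_{i,j}\,\phi_{i,j}(x), \]
and bound the second, $x$-dependent sum directly by $2\sum_{i,j}\|f_{i,j}\|$ using $\phi_{i,j}(x) \le 2$ and the triangle inequality for the translation-invariant metric. For the first sum I would invoke the zero-average hypothesis in the form $\sum_{i,j} f_{i,j} q_{i,j} = 0$: writing $\sum_{i,j} f_{i,j}P_{i,j}$ as $h\sum_{i,j}f_{i,j}q_{i,j}$ plus a remainder controlled by $\sum_{i,j}\|f_{i,j}\|\,|P_{i,j}-q_{i,j}h|$, the average term vanishes and the bound $|P_{i,j}-q_{i,j}h|\le 2$ leaves at most $2\sum_{i,j}\|f_{i,j}\|$. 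Adding the two contributions gives the claimed bound $\|S_h(x)\| \le 4\sum_{i,j}\|f_{i,j}\|$.
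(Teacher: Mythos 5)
Your proposal is correct and follows essentially the same route as the paper: the same decomposition of each sub-segment into $P_{i,j}$ complete crossings plus two partial crossings, the same identity $q_{i,j}=P_{i,j}/h+p'_{i,j}+p''_{i,j}$ giving $|P_{i,j}-q_{i,j}h|\le 2$, and the same two-part estimate using the zero-average condition $\sum_{i,j}f_{i,j}q_{i,j}=0$ to kill the main term. No substantive differences.
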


\begin{theorem}\label{theorem - essential points give finite essential values}
Let $a=a_{j,i}$ be an essential point with respect to a good sequence of periodic approximations.  Then $\sigma=\sigma_{i,j}$ is an essential value of the skew product $\{\tilde{M},\tilde{\varphi}_t\}$ for the flow in direction $\theta$.  In particular, the skew product is recurrent.
\begin{proof}
Using a good sequence of periodic approximations allows us to consider for almost every $x$, for sufficiently large $i$ the flow in direction $\theta$ and the flow in direction $\theta_i$ are indistinguishable through length $h_{i,j}$, at least insofar as the sums $S_{h_{i,j}}(x)$ are concerned.  The point $a$ has a sequence of orbit-neighborhoods $C_k$ which remain in the same periodic cylinder $A_{i_k,j}$.  The only discontinuity of $f$ in $C_k$ is the point $a$ by definition. So we split the orbit-neighborhood into two sets of equal measure $C_{k,1}$ and $C_{k,2}$ so that the sum $S_{h_{i_k}}(x)$ is constant on each.  Furthermore, the difference between the two values taken is exactly $\sigma$.  See Figure \ref{figure - subcylinders}.  Without loss of generality, then, assume that for $x \in C_{k,1}$ and $y \in C_{k,2}$ we have
\[ S_{h_{i_k,j}}(y) = S_{h_{i_k,j}}(x)+\sigma.\]
As there is a uniform bound $P$ across all values and $G$ is locally compact, by passing to a subsequence (which we continue to denote $\{i_k\}$) we may find some limit
\[ \lim_{k \rightarrow \infty} S_{h_{i_{k}}}(y \in C_{k,1}) = g, \quad \lim_{k \rightarrow \infty} S_{h_{i_{k}}}(y \in C_{k,1}) = g+\sigma.\]

Both $C_{k,1}$ and $C_{k,2}$ are a quasi-rigidity sequence sets by Lemma \ref{lemma - our quasirigidity sets}, so by Lemma \ref{lemma - quasiperiodic citation}, both $g$ and $g+\sigma$ are in $E(\tilde{\varphi}_t)$.  As the set of essential values is a group, $\sigma \in E(\tilde{\varphi}_t)$.
\end{proof}
\end{theorem}

\begin{figure}[hb]
\center{
\begin{tikzpicture}[xscale=2]
\draw[fill=cyan, xslant=1/10] (1,0) rectangle (2-1/3,5);
\draw[fill=lime, xslant=1/10] (2-1/3,0) rectangle (2+1/3,5);
\draw (0,0) rectangle (3,5);
\node [left] at (0,2.5) {$A_{i,j}$};
\draw [fill, xslant=1/10] (2-1/3,5) circle (1 pt);
\node[xslant=1/10, above] at (2,5) {$a$};
\draw[dotted] (1,0) -- (1,5);
\draw[red] (1,5) -- (2-1/2,5);
\node [red, above] at (1+1/3,5){$\leq E_{i} \alpha_{i,j}$};
\node at (1.8,1.2) {$C_i$};
\node at (1.6,3){$C_{i,1}$};
\node at (2.3,3){$C_{i,2}$};
\end{tikzpicture}
}
\caption{\label{figure - subcylinders} The two halves of the orbit-neighborhood $C_i = C_{i,1} \cup C_{i,2}$ which do not intersect any discontinuities of $f$ and whose sums differ by exactly $\sigma$ in the proof of Theorem \ref{theorem - essential points give finite essential values}.}
\end{figure}
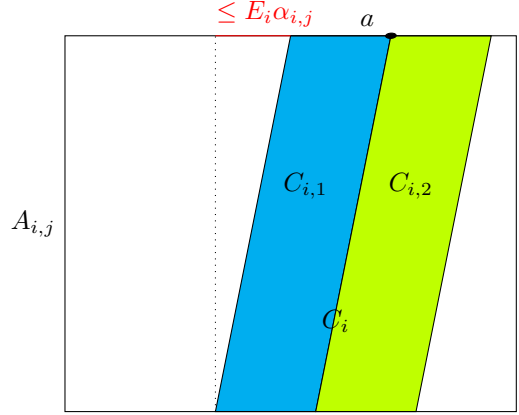

\begin{corollary}\label{corollary - generic generators}
If the closed subgroup of $G$ generated by the `jumps' $\sigma_{j,i}$ at essential points $a_{j,i}$ is all of $G$, then the geodesic flow on $\tilde{M}$ is ergodic in direction $\theta$ provided the underlying flow $\{M, \varphi_t\}$ is ergodic in direction $\theta$.  If all discontinuities $a_{i,j}$ are essential points, then every value $f_{i,j}$ is an essential value for $\{\tilde{M},\tilde{\varphi}_t\}$.
\end{corollary}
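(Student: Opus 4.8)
The plan is to read off both assertions directly from Theorem~\ref{theorem - essential points give finite essential values} together with the structural facts about essential values recorded in Section~\ref{section - surfaces background}. First I would recall that, by that theorem, whenever $a_{i,j}$ is an essential point with respect to the good sequence of periodic approximations, the associated jump $\sigma_{i,j}$ lies in $E(\tilde{\varphi}_t)$. Since $E(\tilde{\varphi}_t)$ then contains at least one such jump it is nonempty, and hence a \emph{closed} subgroup of $G$. Consequently the closed subgroup generated by the whole collection of jumps at essential points is contained in $E(\tilde{\varphi}_t)$. Thus if that generated closed subgroup is all of $G$, we obtain $E(\tilde{\varphi}_t) = G$; invoking the ergodicity criterion from Section~\ref{section - surfaces background}---that $\{\tilde{M},\tilde{\varphi}_t\}$ is ergodic precisely when $E(\tilde{\varphi}_t) = G$ and $\{M,\varphi_t\}$ is ergodic---yields ergodicity of the skew product in direction $\theta$ under the hypothesis that the base flow is ergodic in that direction. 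This settles the first assertion.

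For the second assertion I would again apply the theorem to conclude that, once every discontinuity $a_{i,j}$ is essential, every $\sigma_{i,j}$ is an essential value. The key observation is then the telescoping relation built into the definition of the $\sigma_{i,j}$: from
\[ \sigma_{i,0}=f_{i,1},\qquad \sigma_{i,j}=f_{i,j+1}-f_{i,j}\ \ (1\le j\le N(i)-1), \]
summation gives
\[ f_{i,j}=\sum_{l=0}^{j-1}\sigma_{i,l} \]
for each $j$. Because $E(\tilde{\varphi}_t)$ is a subgroup of $G$ it is closed under finite sums, so each $f_{i,j}$, being a finite sum of essential values, is itself an essential value.

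I do not expect a genuine obstacle here, since the substantive work has already been carried out in Proposition~\ref{proposition - single cylinder avoiding points are essential} and Theorem~\ref{theorem - essential points give finite essential values}; the corollary is a bookkeeping consequence of those results. The only point demanding a little care is the passage from ``each jump is an essential value'' to ``the closed subgroup they generate is contained in $E(\tilde{\varphi}_t)$,'' which relies on $E(\tilde{\varphi}_t)$ being not merely a subgroup but a \emph{closed} one---exactly the structural fact cited in Section~\ref{section - surfaces background}. With that in hand, both halves of the statement follow immediately.
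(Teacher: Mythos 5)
Your argument is correct and is exactly the intended deduction: the paper states this corollary without proof, treating it as an immediate consequence of Theorem~\ref{theorem - essential points give finite essential values} together with the facts from Section~\ref{section - surfaces background} that $E(\tilde{\varphi}_t)$, once nonempty, is a closed subgroup and that $E(\tilde{\varphi}_t)=G$ plus ergodicity of the base yields ergodicity of the skew product. Your telescoping identity $f_{i,j}=\sum_{l=0}^{j-1}\sigma_{i,l}$ correctly recovers each $f_{i,j}$ from the jumps, so both halves are fully justified.
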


That is, for a generic method of making `cuts' $\gamma_i$ on the surface $M$ and using them to form a $G$-fold cover, as long as the values $f_{i,j}$ include generators for the group $G$, the extension is ergodic in almost every direction \textit{provided we may show the discontinuities are essential points in almost-every direction}.

\begin{example}
Let $M$ be the two-torus $\mathbb{T}=[0,1)^2$ with standard identifications (which is flat without any singularities).  Almost every direction $\theta$ has a good sequence of single-cylinder periodic directions given by a subsequence of the continued fraction convergents to $\theta$, and the flow $\{M, \varphi_t\}$ is ergodic in any irrational direction.  Let $\{g_i\}$ generate the group $G$, and for each $i$ construct a pair of parallel geodesics of equal length, at random starting points, in random directions, and of random lengths (with the relevant distributions are all absolutely continuous with respect to Lebesgue).  Define $f$ to be $\pm g_i$ on these pairs.  Then in any direction not parallel to such a segment, the projection of $f$ to the base segment $[0,1)\times\{0\}$ will be of bounded variation ($\var (f) = 4\sum|g_i|$) and have integral zero, so by a combination of Propositions \ref{prop - avoiding points generic in measure} and \ref{proposition - single cylinder avoiding points are essential}, Theorem \ref{theorem - essential points give finite essential values}, and Corollary \ref{corollary - generic generators}, for generic such constructions the skew product $\{\tilde{M}, \tilde{\varphi}_t\}$ is ergodic in almost every direction.

Instead of generating `parallel pairs' of cuts, we could generate \textit{single} cuts which are subdivided appropriately.  For example, suppose that that $a_ig_i + b_i g_i' =0$, where $a_i$ and $b_i$ are in $\mathbb{R}$, and the smallest closed subgroup in $G$ containing all of the $\{g_i,g_i'\}$ is $G$ itself.  Generate $i$ cuts of random length in random directions, and subdivide each cut into two portions whose ratio is $a_i/b_i$; let $f$ take value $g_i$ on the segment of proportional length $b_i$ and value $g_i'$ on the other.
\end{example}

Note that any `reasonable' method for generating cuts on $M$ which ensures that the transfer function $f$ has zero integral in almost-every direction will also generically ensure that the skew product $\{\tilde{M},\tilde{\varphi}_t\}$ is ergodic in almost every direction, provided that bounds on sums in cylinders may be established; by Proposition \ref{proposition - single cylinder avoiding points are essential} Veech surfaces which admit a single-cylinder direction will automatically obey such a bound.

\begin{example}
Let $M$ be \textit{square-tiled}: a finite cover of the torus such that $M$ may be considered to be tiled by squares which share corners.  Then $M$ is a Veech surface; this fact was first shown by Veech using Boshernitzan's criterion in \cite{veech}, but modern proofs center around the use of arithmetic subgroups of $\textrm{SL}_2(\mathbb{R})$ (see e.g. \cite[App. C]{hubert-lelievre}, \cite[Thm. 5.5]{gutkin-judge}).  Assume further that $M$ has a single-cylinder direction, so that almost every direction $\theta$ admits a good sequence of periodic single-cylinder directions.  By the same argument as in the previous example, then, for a generic method of constructing $f$ the corresponding flow is ergodic in almost every direction.
\end{example}

See Appendix \ref{section - make one cylinder directions} for material relating to determining which square-tiled surfaces have a single-cylinder direction as well as for a simple method of generating such surfaces.  While convenient, the assumption that $M$ admits a single-cylinder direction is not necessary to apply the ideas and results of this section; an example of a class of surfaces with no single-cylinder directions for which certain constructions generically produce ergodic covers is addressed in Example \ref{example - generic six tiles} and the remark following that example.


We remark that the idea of comparing ergodic sums on two different sides of a discontinuity goes back at least as far as \cite{schmidt} and can be found more recently in \cite{CG}, and the notion of finding generic points bounded a positive proportional distance from the orbits of singularities can be found in \cite{boshernitzan}.  A similar approach to studying ergodicity of real-valued skew products over rank-one interval exchange transformations can be found in \cite{chaika-hubert}.

\section{Generalized Staircases}

In this section we shall apply the same principles as in our generic constructions to certain \textit{specific} skew products of square-tiled surfaces called \textit{generalized staircases}.

\begin{definition}
Let $M$ be a translation surface, and let $\tilde{M}$ be a $\mathbb{Z}^d$ cover of $M$ formed by taking for each $i=1,2,\ldots,d$ a pair of parallel cuts of equal length, with $f = \pm e_i$ on each cut, where $e_i$ is the $i$-th standard basis element for $\mathbb{Z}^d$.  If all of the cuts intersect at most at their endpoints, then $\tilde{M}$ will be called a \textit{generalized staircase} over $M$; there is a natural visualization of the flow on the surface with some number of cuts which transfer the flow along a $\mathbb{Z}^d$-periodic cover.  If furthermore $M$ is square-tiled and each cut is of length one (the length of the sides of the squares) and all endpoints are integer points, $\tilde{M}$ will be called a \textit{natural staircase} over $M$.
\end{definition}

Note that if each cut is nontrivial in homology, then the $\mathbb{Z}^d$-cover of $M$ is \textit{unramified}, and if $M \in \mathcal{H}(2)$, by \cite[Thm. 1.4]{fraczek-ulcigrai} the system $\{\tilde{M},\tilde{\varphi}_t\}$ is \textit{not} ergodic in almost every direction.  However, if some of the cuts have endpoints which are \textit{not} in $D$ (the singularities of the flat matric on $M$), then we have constructed a ramified cover over a set of marked points $D$.  It is direct to see that if  $M$ consists of a single square then only trivial natural staircases are possible.  If we represent the torus with \textit{two} squares, however, then a natural staircase which is a ramified cover becomes possible.  This cover of the torus was presented in \cite{hooper-hubert-weiss}, where is was shown to be equivalent to a previously studied \textit{cylinder transformation}, known to be ergodic in all irrational directions.

Generalized staircases can be parameterized by the base points and lengths of their respective cuts.  The results of the previous section may be readily adapted under the addition  restriction that the cuts are parallel to sides to ensure that generic generalized staircases are ergodic in almost every direction.  However specific directions can have quite different behavior.

\begin{example}
Let $M = \mathbb{T}$ be the two-torus.  Suppose that $\theta$ is a direction which admits a good sequence of periodic approximations.  Then there exist uncountably many choices of $\beta, r$ such that with cuts as in Figure \ref{figure - conze nonregular example}, the skew product model of the corresponding generalized staircase $\{\tilde{\mathbb{T}},\tilde{\varphi}_t\}$ is \textit{nonregular} (the only essential values are $\{0,\infty\}$).  This staircase is the same as the skew product considered by Conze in \cite[Thm. 4.2]{conze-cocycles-rotation} where exactly this conclusion is shown.  Therefore the geodesic flow is \textit{recurrent} ($0$ is an essential value), but not ergodic (not all of $\mathbb{Z}$ are essential values).  
\end{example}


\begin{figure}
\centering{\begin{tikzpicture}
\draw (0,0) rectangle (4,4);
\draw[thick, red] (0,0) --  (1.2,0);
\node[below] at (.6,0){$+1$};
\node[above] at (1.2,0){$r$};
\draw[thick, red] (2,0) -- (3.2,0);
\node[below] at (2.6,0){$-1$};
\node[above] at (3.2,0){$\beta+r$};
\node[above] at (2,0){$\beta$};
\draw[->] (0,0) -- (.8,3.2);
\node[right] at (.4,1.6){$\theta$};
\draw[fill] (0,0) circle (1 pt);
\draw[fill] (1.2,0) circle (1 pt);
\draw[fill] (2,0) circle (1 pt);
\draw[fill] (3.2,0) circle (1 pt);
\end{tikzpicture}}
\caption{\label{figure - conze nonregular example}A generalized staircase over the $2$-torus which is nonregular for uncountably many parameters $\beta$, $r$.}
\end{figure}
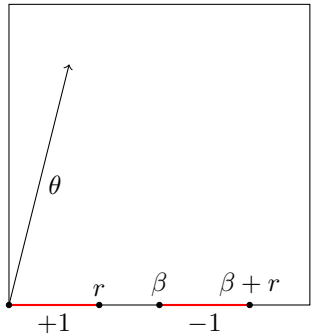

The situation of natural staircases is in some sense more interesting: the severe restrictions imposed by the integer endpoints of the cuts present an immediate obstacle to directly applying the techniques of the previous section without modification.  If the staircase is given by an unramified cover, then all endpoints of the cuts are singularities, and there is no adapting our techniques (indeed, for $M \in \mathcal{H}(2)$ the flow will not be ergodic in almost every direction by \cite{fraczek-ulcigrai}).  So assume that at least one of the cuts has an endpoint (with integer coordinates, as we consider a natural staircase) which is not a singularity.  In a square-tiled surface, the directions which admit representation by periodic cylinders correspond to rational-slope flows on the plane.  The endpoints of our cuts are all integer points in the plane, as are all of the singularities $\tilde{D}$; the endpoints of our cuts will always belong to the edge of the periodic cylinders and therefore orbit into singularities in the directions of the periodic approximations, a rather insurmountable impediment to directly apply Definition \ref{definition - self avoiding}.

Thus far, however, the only identifications of the edges of the cylinders we have used is the identification of the top to the bottom, but there are also certain identifications on the \textit{sides} of the cylinders.  It is illustrative to make explicit which directions for square-tiled surfaces form different types of periodic approximation.  Let $M$ be square-tiled on $k$ squares.  The flow in direction $\theta$ induces a rotation by $\theta' = 1/\theta \mod 1$ on $\onetorus$ when the bases of all squares are projected to a single circle.  Let the continued fraction expansion of $\theta'$ be given by
\[ \theta' = [a_1,a_2,a_3,\ldots].\] Then the periodic approximations corresponding to the sequence of the convergents $p_{n}/q_{n}$ to $\theta'$.  The relative error $E_n$ of the approximation is no more than $k q_n\|q_{n} \theta'\|$ (the standard error of $\|q_{n} \theta'\|$ multiplied by the number of squares, as a single cylinder must wind through all of them, and divided by the width $1/q_n$), and it is standard that
\[ \frac{k}{a_{n+1}+1} \leq k q_n\|q_{n} \theta\| \leq \frac{k}{a_{n+1}}.\]
Therefore a good sequence of periodic approximations corresponds to a subsequence of partial quotients diverging, while a fairly good sequence 
of periodic approximations corresponds to a subsequence of partial quotients at least as large as $2k+1$, but bounded above.  Both situations are 
clearly satisfied for generic $\theta'$.  It is not immediately clear that if $M$ admits a single-cylinder direction that both conditions are generically 
satisfied along the subsequence of $p_n/q_n$ which correspond to these single-cylinder directions, but this fact holds by the Patterson/Sullivan theorem 
used in Lemma \ref{lemma - approximate full measure of directions}.

\begin{theorem}\label{theorem - get essential values in staircases}
Suppose that $M$ is square-tiled and $f$ defines a natural staircase.  Assume that $d$ is not a singularity of the flat metric, but is an endpoint of a
cut along which $f$ takes the value $e_{i}$ (and not the end point of any other cut).
Assume further that along a subsequence of fairly good periodic approximations there exist cylinders $A_{i,j}$  with $d$ on \textit{both} sides of $A_{i,j}$, and for each of these approximations $\theta_i$ there is some $P$ (independent of $i$) such that for each $x \in A_{i,j}$
\[ \| S_{h_{i,j}}(x)\| \leq P,\] where the sum is with respect to the flow in the periodic direction $\theta_i$.  Then $e_i$ is a finite essential value of $\{\tilde{M},\tilde{\varphi}_t\}$ in direction $\theta$. If $d$ is the endpoint of several cuts,
then the essential value is  given by the sum of the values of $f$ on the cuts terminating at $d$.
In both cases the staircase is recurrent.
\end{theorem}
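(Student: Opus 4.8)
The plan is to reduce to the group-property argument used in Theorem \ref{theorem - essential points give finite essential values}: I will exhibit two quasi-rigidity sequences of subcylinders whose ergodic sums converge to two group elements differing by exactly $e_i$, so that both limits lie in $E(\tilde\varphi_t)$ and hence so does their difference $e_i$. The genuinely new feature, compared with Theorem \ref{theorem - essential points give finite essential values}, is that $d$ sits on the boundary of the periodic cylinders (being an integer point, its orbit in every periodic direction runs into a singularity), so Definition \ref{definition - self avoiding} and the essential-point machinery cannot be applied to $d$ directly. This is exactly what the hypothesis ``$d$ on both sides of $A_{i,j}$'' is designed to circumvent: the two occurrences of $d$ on the two lateral boundaries of the cylinder let me build the two halves of a would-be orbit-neighborhood of $d$ out of thin subcylinders, one adjacent to each side.

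First I would fix the subsequence of fairly good periodic approximations $\theta_{i_k}$ furnished by the hypothesis, along which $A_{i_k,j}$ has $d$ on both sides and $\|S_{h_{i_k,j}}\|\le P$ on the whole cylinder in the direction $\theta_{i_k}$. Because $M$ is square-tiled the cylinder widths are $\alpha_{i_k,j}=1/q_{n_k}\to 0$; hence, although these are only fairly good approximations (so the relative error $E_{i_k}$ stays bounded away from zero), the absolute displacement $\|\varphi_{h_{i_k,j}}(x)-x\|\le E_{i_k}\alpha_{i_k,j}\le \tfrac12\alpha_{i_k,j}$ still tends to zero. Thus the rigidity and near-invariance estimates in the proof of Lemma \ref{lemma - our quasirigidity sets} go through for subcylinders of comparable measure even though the approximations are only fairly good; the upper bound $E_{i_k}<1/2$ keeps the shearing under control while the lower bound $E_{i_k}>\delta$ guarantees that a subcylinder of definite proportional width is swept past $d$.

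Next I would split $A_{i_k,j}$ along the orbit of $d$ in the direction $\theta$. Since $d$ appears on both lateral boundaries, this orbit enters the cylinder and separates a subcylinder $C_k$ (of measure a fixed proportion of $\mu(A_{i_k,j})$, bounded below by the comparable-measures assumption) into two pieces $C_{k,1}$ and $C_{k,2}$ on which the sum over the orbit of length $\approx h_{i_k,j}\sec|\theta-\theta_{i_k}|$ is constant and whose values differ by exactly the $f$-value $e_i$ carried by the cut terminating at $d$. Comparing the sum in direction $\theta$ with the controlled sum $S_{h_{i_k,j}}$ in the periodic direction $\theta_{i_k}$ costs only a uniformly bounded number of extra or missing cut crossings (the total cut length being fixed and the sheared region having width $O(\alpha_{i_k,j})$), so both sums stay bounded by a constant depending only on $P$ and $\sum\|f_{i,j}\|$. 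Each of $C_{k,1}$, $C_{k,2}$ is then a quasi-rigidity sequence by the adapted Lemma \ref{lemma - our quasirigidity sets}, and by local compactness I pass to a subsequence along which the sums converge, say to $g$ on $C_{k,1}$ and to $g+e_i$ on $C_{k,2}$. Lemma \ref{lemma - quasiperiodic citation} then places both $g$ and $g+e_i$ in $E(\tilde\varphi_t)$, and since $E(\tilde\varphi_t)$ is a group their difference $e_i$ is an essential value.

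The remaining claims are immediate from this scheme. If $d$ is the endpoint of several cuts, the orbit of $d$ separates the orbits crossing all of those cuts from those crossing none, so the two limiting sums differ by the sum of the corresponding $f$-values, which is therefore essential. Finally, once $E(\tilde\varphi_t)$ is shown to be nonempty it is a closed subgroup, so it contains $0$ and the staircase is recurrent. The step I expect to be the main obstacle is the middle one: verifying that the shearing produced by a \emph{fairly good} (rather than good) approximation drifts a subcylinder of positive proportional measure from one occurrence of $d$ across to the other while changing the number of crossings of the cut at $d$ by exactly one, and that the $\theta$-sums on the two resulting pieces are genuinely constant up to a bounded error. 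This is precisely where the two-sided hypothesis on $d$ and the two-sided bound $E_{i_k}\in(\delta,1/2)$ must be used together.
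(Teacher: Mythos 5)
Your proposal is correct and follows essentially the same route as the paper's proof: both arguments build the two halves of a surrogate orbit-neighborhood as thin sheared parallelograms adjacent to the two boundary copies of $d$, use the lower bound $E_i>\delta$ to keep their measure bounded below and the upper bound $E_i<1/2$ to control the shear, observe that only the cuts terminating at $d$ contribute a difference between the two halves, and then combine the uniform bound $P$ with local compactness, Lemmas \ref{lemma - our quasirigidity sets} and \ref{lemma - quasiperiodic citation}, and the group property of $E(\tilde{\varphi}_t)$ exactly as in Theorem \ref{theorem - essential points give finite essential values}. The point you flag as the main obstacle is handled in the paper by noting that on a square-tiled surface successive singularities and cut endpoints on the cylinder boundary are separated by vertical distance at least $q_i$, which is what makes the sums on $C_{i,1}$ and $C_{i,2}$ genuinely constant.
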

\begin{proof}
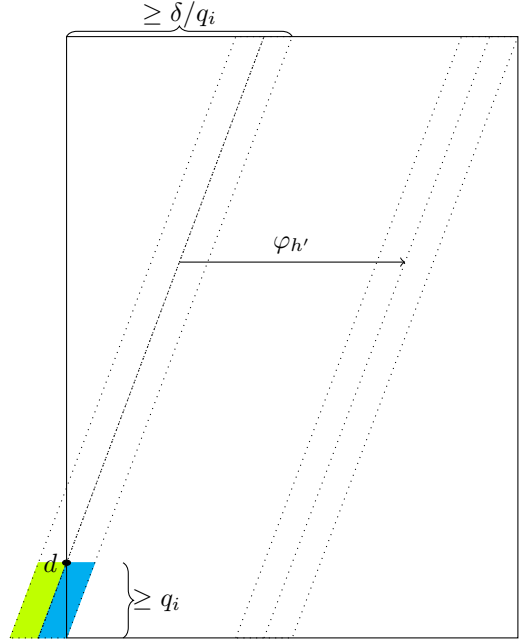
\begin{figure}[ht]
\centering{\begin{tikzpicture}[xscale=1.5]
\filldraw[draw=lime, fill=lime] (-.5,0) -- (-.25,1) -- (0,1) -- (-.25,0) -- (-.5,0);
\filldraw[draw=cyan, fill=cyan] (-.25,0) -- (0,1) -- (.25,1) -- (0,0) -- (-.25,0);
\draw[dotted] (-.5,0) -- (1.5,8) -- (1.75,8) -- (-.25,0)--(-.5,0);
\draw[dotted] (-.25,0) -- (0,0) -- (2,8) -- (1.75,8) -- (-.25,0);
\draw (0,0) rectangle (4,8);
\draw[fill] (0,1) circle (1 pt);
\node[left] at (0,1){$d$};
\draw[dotted] (-.25,0) -- (1.75,8);
\draw[dotted] (1.5,0) -- (3.5,8) -- (4,8) -- (2,0) -- (1.5,0);
\draw[dotted] (1.75,0) -- (3.75,8);
\draw[->] (1,5) -- node[above]{$\varphi_{h'}$} (3,5);
\draw[snake = brace, segment amplitude = 4pt](0,8) -- node[above, anchor=south]{$\geq \delta/q_i$} (2,8);
\draw[snake =brace, segment amplitude=4pt](.5,1)-- node[right]{$\geq q_i$} (.5,0);
\end{tikzpicture}}
\caption{\label{figure - neighborhoods for staircases} A modified orbit-neighborhood used in staircases: $C_{i,1}$ and $C_{i,2}$ are the two colored parallelograms.  The region outside the cylinder appears somewhere inside via the appropriate identification and there are no singularities within $C_{i,1}$ or $C_{i,2}$.  No cuts have endpoints within these neighborhoods other than the specified cuts with values $\pm e_i$ with common endpoint $d$.  The time $h'$ flow (to traverse the entire vertical distance) acts as horizontal translation by $E_n/q_n \in [\delta/q_n,1/2q_n]$ on this set. The constant $\delta$ is from the definition of fairly good approximation.}
\end{figure}

The singularities and ramification points are arranged along the edges of the periodic cylinders $A_{i,j}$.  The flow in the direction $\theta_i$ induces 
the rotation by $p_i/q_i$ on the projection of all horizontal bases of the $k$ squares to a single circle.  Note that the vertical distance between successive
singularities and ramification points on the boundary of the cylinder (i.e.\ the geodesic distance in direction $\theta_i$ between them) must be at 
least $q_i$, and the height of any cylinder is no more than $kq_i$, where $k$ is the number of squares in $M$.  Consider the flow of a vertical 
interval of length $q_i$ both above and below $d$ on both sides of $A_{i,j}$, and arrange the cylinder so that this interval is at the bottom of one side.  We flow this interval forward until intersecting the height of $d$ within the cylinder, and backwards to the height of the base of the cylinder; this notion is well-defined despite the flow exiting the cylinder in one of these directions as there are no singularities near $d$ (see Figure \ref{figure - neighborhoods for staircases}).  No other
discontinuities of $f$ are in these vertical segments, then, and furthermore the cuts with endpoint $d$ cuts across the cylinder $A_{i,j}$ from exactly one
of the two copies of $d$ on the boundary of $A_{i,j}$ (because $d$ is an endpoint of the cut).  See Figure \ref{figure - neighborhoods for staircases}; the two halves of this orbit neighborhood are $C_{i,1}$ and $C_{i,2}$; because there are no singularities within height $q_i$ of $d$ along the boundary of the cylinder there is no problem extending these neighborhoods outside the boundary.  The distances are maintained as drawn, and any cut besides those with endpoint $d$ which might intersect these neighborhoods must pass completely through them; the only endpoint of any cut seen in $C_{i,1}$ and $C_{i,2}$ is $d$.  The difference in the ergodic sums is therefore exactly a sum of $\pm e_i$, the values of $f$ on the cuts terminating at $d$.  As the error $E_i$ is not larger than half the width of $A_{i,j}$ (as we have a fairly good sequence of periodic directions), these neighborhoods may flow for time to traverse the entire height of the cylinder without intersecting the sides (except for the small bit at the bottom where there are no singularities, as mentioned prior).  The measure of each orbit neighborhood can be measured by the product of the vertical length of the segment between $d$ and the nearest singularity (which is at least $q_i$ as we are square-tiled) and the horizontal drift; the error term $E_i$, which is a positive portion of $1/q_i$ as we have assumed only a fairly good sequence of periodic approximations; the measure of each $C_{i,m}$ is at least $\delta$, where $\delta>0$ is the fixed constant from the definition of a fairly good sequence of periodic approximations.

Therefore the sets $C_{i,1}$ and $C_{i,2}$ again form a quasi-rigidity sequence of sets (their near-invariance under $\varphi_1$ is clear from construction).  As in Theorem \ref{theorem - essential points give finite essential values}, the
the sum of the values of $f$ on the cuts terminating at $d$ is the difference in sums between infinitely many of the two $C_{i,1}$ and $C_{i,2}$, and is therefore an essential value.
\end{proof}

\begin{corollary}\label{corollary - ergodic natural staircases}
Suppose that for each $i$, at least one endpoint one of the cuts where $f$ takes value $e_i$ is not a singularity and is not the endpoint of any other cut, and assume that $M$ admits a single-cylinder direction.  Then the natural staircase given by $f$ is ergodic in almost every direction.
\begin{proof}
As $M$ admits a single-cylinder periodic direction, almost every $\theta$ has a fairly good sequence of single-cylinder periodic directions by Lemma \ref{lemma - approximate full measure of directions}.  As there is only one cylinder to consider, the relevant bound on sums used in Theorem \ref{theorem - get essential values in staircases} is satisfied by Proposition \ref{proposition - single cylinder avoiding points are essential}.  As we assume that each cut has a non-singularity endpoint not shared by any other cut, the group of essential values therefore contains each $e_i$.
\end{proof}
\end{corollary}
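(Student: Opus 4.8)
The plan is to reduce the statement to the essential-value criterion recalled in \S\ref{section - surfaces background} and then feed the single-cylinder hypothesis into Theorem~\ref{theorem - get essential values in staircases}. Since $G=\mathbb{Z}^d$, the skew product $\{\tilde{M},\tilde{\varphi}_t\}$ is ergodic in a direction $\theta$ as soon as $\{M,\varphi_t\}$ is ergodic in that direction and $E(\tilde{\varphi}_t)=\mathbb{Z}^d$. The base flow is ergodic in almost every direction by \cite{1986}, so the whole problem is to show, for almost every $\theta$, that every standard generator $e_i$ is an essential value. Because $E(\tilde{\varphi}_t)$ is a closed subgroup and there are only finitely many indices $i$, I would establish ``$e_i\in E(\tilde{\varphi}_t)$ for almost every $\theta$'' one $i$ at a time and then intersect the finitely many resulting full-measure sets of directions; once all the $e_i$ lie in $E(\tilde{\varphi}_t)$ the subgroup is forced to be all of $\mathbb{Z}^d$.

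Fix $i$ and let $d$ be the non-singular endpoint of a cut carrying $e_i$ that belongs to no other cut. I would obtain $e_i\in E(\tilde{\varphi}_t)$ by checking the two hypotheses of Theorem~\ref{theorem - get essential values in staircases}. For the approximation hypothesis, since $M$ has a single-cylinder direction, Lemma~\ref{lemma - approximate full measure of directions} applied with $N=1$ produces, for almost every $\theta$, a fairly good sequence of single-cylinder periodic approximations with one cylinder of comparable measure. The payoff of having a single cylinder is that it exhausts $M$: cutting it open into the fundamental rectangle sends each boundary point to both the left and the right edge, and the integer ramification point $d$ lies on a boundary saddle connection of every rational-slope cylinder decomposition (this is the placement of singularities and ramification points on cylinder edges exploited inside Theorem~\ref{theorem - get essential values in staircases}). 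Hence $d$ automatically occurs on both sides of the cylinder.

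For the uniform-bound hypothesis I would invoke Proposition~\ref{proposition - single cylinder avoiding points are essential}. In a natural staircase the cuts carrying $+e_i$ and $-e_i$ are parallel and of equal length, so their transverse projections cancel and $f$ has integral $0$ in every direction; this is exactly condition~\eqref{eqn - zero integral condition}. The Koksma-type inequality then bounds $\|S_h(x)\|$ by $4\sum_{i,j}\|f_{i,j}\|$, a constant depending only on $f$ and therefore independent of the approximation, which furnishes the single constant $P$ required along the whole sequence. Theorem~\ref{theorem - get essential values in staircases} now yields $e_i\in E(\tilde{\varphi}_t)$, and combining over $i$ gives $E(\tilde{\varphi}_t)=\mathbb{Z}^d$ together with base ergodicity, hence ergodicity in almost every direction. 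The step I expect to demand the most care is the geometric bookkeeping that forces a \emph{fairly good} rather than a good sequence: the error $E\in(\delta,1/2)$ keeps the transverse drift a definite fraction $\ge\delta/q$ of the cylinder width, so the two half-neighborhoods on either side of $d$ keep measure bounded below and form a quasi-rigidity sequence, whereas a good sequence with $E\to0$ would let them degenerate. This mechanism, however, is internal to Theorem~\ref{theorem - get essential values in staircases}, so for the corollary itself the real work is only the verification of its two hypotheses above.
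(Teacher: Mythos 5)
Your proposal is correct and follows essentially the same route as the paper's own proof: Lemma \ref{lemma - approximate full measure of directions} supplies the fairly good single-cylinder approximations, Proposition \ref{proposition - single cylinder avoiding points are essential} supplies the uniform bound $P$, and Theorem \ref{theorem - get essential values in staircases} then delivers each $e_i$ as an essential value. The only difference is that you spell out the details the paper leaves implicit (the zero-integral cancellation from the parallel $\pm e_i$ cuts, the fact that a single cylinder forces $d$ onto both sides, and the finite intersection of full-measure sets of directions), all of which you handle correctly.
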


\begin{example}
Consider the surface on at least five squares given by Figure \ref{figure - the 5 square surface with cuts labelled}; one row of two squares and one row of at least three.  The first presentation with cuts as marked on the left and its associated staircase is \textit{not} ergodic in almost every direction by \cite[Thm. 1.4]{fraczek-ulcigrai}; The surface is in $\mathcal{H}(2)$ and both cuts are nontrivial in $H_1(M,\mathbb{Z})$. The presentation on the right, however, has both cuts with an endpoint which is not a singularity; by Theorem \ref{theorem - get essential values in staircases}, then, the flow on the corresponding staircase is ergodic in almost every direction.  Note that the two compact surfaces are the same, and the only difference is in the choice of cuts!
\end{example}

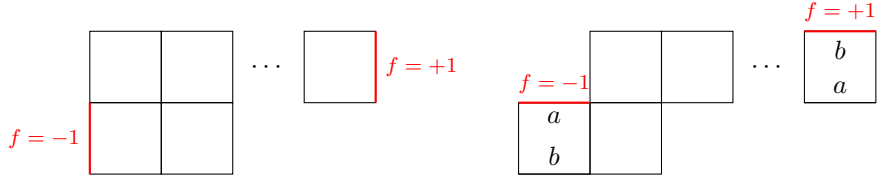
\begin{figure}[htb]
\centering{\begin{tikzpicture}[scale=.95]
\draw (0,0) rectangle (1,1);
\draw (1,0) rectangle (2,1);
\draw (0,1) rectangle (1,2);
\draw (1,1) rectangle (2,2);
\node at (2.5,1.5){$\cdots$};
\draw (3,1) rectangle (4,2);
\draw[thick, red] (4,1) -- node[right]{\footnotesize{$f=+1$}}(4,2);
\draw[thick, red] (0,0) -- node[left]{\footnotesize{$f=-1$}}(0,1);
\draw (6,0) rectangle (7,1);
\draw (7,0) rectangle (8,1);
\draw (8,1) rectangle (9,2);
\draw (7,1) rectangle (8,2);
\node at (9.5,1.5){$\cdots$};
\draw (10,1) rectangle (11,2);
\draw (10,1) -- node[above]{$a$} (11,1);
\draw (6,1) -- node[below]{$a$} (7,1);
\draw (10,2) -- node[below]{$b$} (11,2);
\draw (6,0) -- node[above]{$b$} (7,0);
\draw[thick, red] (10,2) -- node[above]{\footnotesize{$f=+1$}} (11,2);
\draw[thick,red](6,1) -- node[above]{\footnotesize{$f=-1$}} (7,1);
\end{tikzpicture}}
\caption{\label{figure - the 5 square surface with cuts labelled}Both surfaces are equivalent (given the same number of omitted squares), but the two different realizations as a natural staircase are quite different.  Identification is by opposite-side except where marked with $a,b$.}
\end{figure}

The condition that there is some direction $\theta'$ such that $\{M, \varphi_t\}$ decomposes into a single periodic cylinder is convenient but not necessary:

\begin{example}\label{example - generic six tiles}
Let $M$ be the square-tiled surface in Figure \ref{figure - six squares}, and assume that the number of tiles in each row is odd and at least five.  Then if we construct a $\mathbb{Z}$-staircase using the cuts marked, this staircase is ergodic in almost every direction.  This surface, however, admits no single-cylinder directions.
\end{example}

\begin{proof}
That each row has more than three squares ensures that the two cuts with value $+1$ do not join together to form a loop; each cut of value $+1$ therefore has both endpoints not singularities.  The assumption that the number of squares in each row is odd guarantees the existence of an integer involution point; a vertex around which we may rotate the entire figure by $\pi$.  Suppose that some flow in slope $q/p>1$ is a single-cylinder direction.  This flow induces a rotation by $p/q$ on the bases of the squares; a periodic cylinder will be of width $1/q$ and may be taken to begin with an interval of length $1/q$ extending horizontally from this involution point.  That this surface has no single-cylinder directions follows from Corollary \ref{corollary - no single cylinder}, where this particular example is referenced.

The surface clearly has a decomposition into two cylinders of equal measure and length: the horizontal flow achieves such a decomposition, and the two cylinders are exchanged by the involution.  By setting the involution point to be the origin, the involution itself may be represented as $-\textrm{Id}$.  Of course $-\textrm{Id}$ is in the center of $\Gamma$, the Veech group.  It follows that any image of these two cylinders under any element of $\Gamma$ also consists of two components which are exchanged by the involution.

Note that $f$, the function used to define the staircase, is invariant under this involution.  Combining this fact with the fact that the involution exchanges the two cylinders, we see that the ergodic sum in each periodic cylinder is equal to the other.  As their sum must be the average of the function, each periodic cylinder sees a total sum of zero.  We may then consider the endpoint of the cut with value $+1$ and directly apply the same construction in Figure \ref{figure - neighborhoods for staircases} to find an essential value of $\pm 1$ for this staircase.

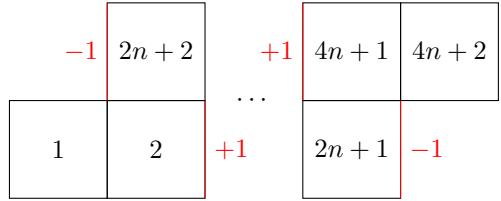
\begin{figure}[htb]
\center{\begin{tikzpicture}[scale=1.3]
\draw (0,0) rectangle node{$1$} (1,1);
\draw (1,0) rectangle node{$2$} (2,1);
\draw (1,1) rectangle node{$2n+2$} (2,2);
\draw (3,0) rectangle node{$2n+1$} (4,1);
\draw (3,1) rectangle node{$4n+1$} (4,2);
\draw (4,1) rectangle node{$4n+2$} (5,2);
\node at (2.5,1){$\cdots$};
\draw[red] (1,1) -- node[left]{$-1$} (1,2);
\draw[red] (2,0) -- node[right]{$+1$} (2,1);
\draw[red] (3,1) -- node[left]{$+1$}(3,2);
\draw[red] (4,0) -- node[right]{$-1$}(4,1);
\end{tikzpicture}}
\caption{\label{figure - six squares}The surface $M$ for example \ref{example - generic six tiles}; the labels one through $4n+2$ on the squares are used in Appendix \ref{section - make one cylinder directions}.  Opposite sides are identified; the surface is in $\mathcal{H}(1,1)$ and has no single-cylinder directions.  We form a staircase using the indicated values on the specified unit intervals.}
\end{figure}

\end{proof}

Note that any probabilistic technique of generating covers (not restricted to natural staircases, so the cuts are simple geodesic segments as in \S\ref{section - generic}) on these surfaces which respects the following
\begin{itemize}
\item The flow in the horizontal direction of \textit{every} point sees a sum of zero across all cuts traversed through one period,
\item the cuts are generated pairwise, invariant under the involution,
\item the values taken on the cuts generate the group $G$
\end{itemize}
will generically produce covers which are ergodic in almost-every direction, exactly via the techniques of \S\ref{section - generic}.


\appendix

\section{Permutations and parallelogram-tiled surfaces}\label{section - make one cylinder directions}
While the aforementioned results of Hubert-Lelievre, McMullen, Kontsevich-Zorich and Lanneau-Nguyen guarantee for certain square-tiled $M$ the existence of directions $\theta_i$ which decompose $M$ into a single periodic cylinder (providing the bound on ergodic sums necessary in the definition of essential points, Definition \ref{definition - essential point}), it is of interest to quickly determine which square-tiled surfaces admit such a direction in general.

Any square-tiled translation surface, with $k$ total squares, may be identified with a pair of permutations in $\Sigma_k$ (the permutation group of $k$ elements) $(\sigma_h, \sigma_v)$.  From a combinatorial perspective it is unimportant that the tiles are squares; any parallelogram tiling would suffice (and still allows a notion of `top/bottom' and `left/right').  The permutation $\sigma_h$ is given by the identification of the right side of the tile indexed by $i$ with the left side of the tile indexed by $\sigma_h(i)$, while the top of tile $i$ is identified with the bottom of the tile of index $\sigma_v(i)$: see Figure \ref{figure - permutation for tiling}.  The observation that square-tiled surfaces may be indexed by pairs of permutations appears in \cite{schmithuesen}.  

\begin{figure}[htb]
\center{
\begin{tikzpicture}[scale =2]
\draw (0,0) rectangle node{$i$} (1,1);
\draw (0,1) rectangle node{$\sigma_v(i)$} (1,2);
\draw (1,0) rectangle node{$\sigma_h(i)$} (2,1);
\end{tikzpicture}
}
\caption{\label{figure - permutation for tiling} The permutations $\sigma_h$ and $\sigma_v$ used to identify the square-tiled $M$.  $\sigma_h$ tracks identification of the horizontal flow, and $\sigma_v$ tracks the vertical flow.}
\end{figure}
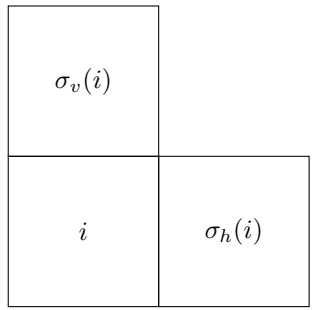

It is clear that any choice of $(\sigma_h, \sigma_v)$ along with a choice of parallelogram tile defines a square-tiled translation surface $M$, and that any choice of charts for a surface $M$ which gives a tiling by parallelograms can be identified with such a pair of permutations.  In the search for single-cylinder directions the distinction between squares and parallelograms is unimportant (the two being distinguished only by an affine transformation), so we will use squares without loss of generality.

\begin{theorem}\label{theorem - stratum information}
The parallelogram-tiled translation surface identified by $(\sigma_h,\sigma_v)$ is connected if and only if $\langle \sigma_h,\sigma_v \rangle$, the subgroup of $\Sigma_k$ generated by $\sigma_h$ and $\sigma_v$, acts transitively on $\mathbb{Z}_k$.  The cycle decomposition of the commutator 
\[\tilde{\sigma}=[\sigma_v,\sigma_h]=\sigma_v^{-1} \circ \sigma_h^{-1} \circ \sigma_v \circ \sigma_h\]
encodes the stratum of $M$: the number of distinct vertices is given by the number of cycles in $\tilde{\sigma}$, and each corresponding vertex has cone angle $2 \pi$ times the length of the cycle.
\begin{proof}
Suppose that there is a proper subset $S \subset \mathbb{Z}_k$ which is invariant under each of $\sigma_h$, $\sigma_v$.  Then the collection of squares indexed by $S$ contains no identification to squares indexed by $\mathbb{Z}_k \setminus S$, thereby forming at least two connected components.  On the other hand, if there are no proper invariant subsets, then given any two indices $i$ and $j$, there is a sequence of $\sigma_{t_n}$ which connect the square indexed by $i$ to that indexed by $j$ through some sequence of side identifications (where $t_n \in \{h,v\}$).

Consider the upper-right corner of the tile indexed by $i$.  The commutator $\tilde{\sigma}$ exactly encodes the trajectory of a counter-clockwise circle around this corner, and the conclusion is direct from this observation.  Let $V$ be the number of cycles in $[\sigma_v,\sigma_h]$.  Then $M$ is given as a surface with $k$ faces (the tiles), $2k$ edges (each edge is used in exactly two tiles) and $V$ vertices.  By computing the Euler characteristic, the genus $g$ is given by
\[g = \frac{k-V}{2}+1,\]
assuming that $M$ is connected (otherwise we could consider each connected component separately).
\end{proof}
\end{theorem}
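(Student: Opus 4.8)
The plan is to separate the two claims, proving the transitivity criterion for connectedness first and then analyzing the vertices through a corner-tracing computation, from which the cone angles and the genus formula both follow.

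For connectedness I would argue both directions by contraposition. If $\langle \sigma_h, \sigma_v \rangle$ does not act transitively, choose a proper nonempty orbit $S \subset \mathbb{Z}_k$; since $S$ is invariant under $\sigma_h^{\pm 1}$ and $\sigma_v^{\pm 1}$, no edge of a tile indexed by $S$ is ever identified with an edge of a tile indexed by its complement, so the tiles over $S$ form a proper open-and-closed subsurface and $M$ is disconnected. Conversely, transitivity means that for any two indices $i,j$ some word in $\sigma_h^{\pm 1}, \sigma_v^{\pm 1}$ sends $i$ to $j$; each letter records the crossing of a single glued edge, exhibiting a chain of edge-adjacent tiles from $i$ to $j$, so $M$ is connected. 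This part is routine.

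The heart of the matter is the identification of vertices with cycles of $\tilde{\sigma}$. I would label the four corners of each tile $\mathrm{LL}, \mathrm{LR}, \mathrm{UL}, \mathrm{UR}$ and read off the corner identifications forced by the edge gluings: ``top of $i$ equals bottom of $\sigma_v(i)$'' gives $\mathrm{UR}(i) = \mathrm{LR}(\sigma_v(i))$ and $\mathrm{UL}(i) = \mathrm{LL}(\sigma_v(i))$, while ``right of $i$ equals left of $\sigma_h(i)$'' gives $\mathrm{UR}(i) = \mathrm{UL}(\sigma_h(i))$ and $\mathrm{LR}(i) = \mathrm{LL}(\sigma_h(i))$. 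A vertex of $M$ is an equivalence class of tile-corners under these relations, and its cone angle is $\pi/2$ times the number of tile-corners it contains, since each corner of a square is a right angle. Now follow a small counterclockwise circle about the vertex carrying the upper-right corner of tile $i$: placing that corner at the origin with tile $i$ in the angular sector $(\pi, 3\pi/2)$, sweeping counterclockwise exits across the right edge into $\mathrm{UL}(\sigma_h(i))$, then across the top edge into $\mathrm{LL}(\sigma_v \sigma_h(i))$, then across the left edge into $\mathrm{LR}(\sigma_h^{-1} \sigma_v \sigma_h(i))$, and finally across the bottom edge back to the upper-right corner of tile $\sigma_v^{-1} \sigma_h^{-1} \sigma_v \sigma_h(i) = \tilde{\sigma}(i)$. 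Thus one full turn of $2\pi$ sweeps four tile-corners and advances the upper-right index by $\tilde{\sigma}$; the circle closes after exactly $\ell$ turns, where $\ell$ is the length of the cycle of $\tilde{\sigma}$ through $i$, so the vertex gathers $4\ell$ corners and has cone angle $(\pi/2)(4\ell) = 2\pi \ell$. Since every vertex contains upper-right corners (the four corner-types recur cyclically along the circle) and each upper-right corner lies on exactly one vertex, the vertices are in bijection with the cycles of $\tilde{\sigma}$, with the stated cone angles.

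I expect the orientation bookkeeping in this trace to be the main obstacle: one must fix a consistent counterclockwise convention and, at each of the four crossings, track simultaneously which edge is crossed and which corner of the next tile one lands on, so that the crossings compose in the order $\sigma_h, \sigma_v, \sigma_h^{-1}, \sigma_v^{-1}$ and reproduce exactly the commutator $\sigma_v^{-1} \sigma_h^{-1} \sigma_v \sigma_h$ in the stated sense. Once $V$, the number of vertices, is identified with the number of cycles of $\tilde{\sigma}$, the genus is immediate from Euler's formula: $M$ has $F = k$ faces, $E = 2k$ edges (each edge is shared by two tiles), and $V$ vertices, so $\chi(M) = V - 2k + k = V - k$; setting this equal to $2 - 2g$ gives $g = (k - V)/2 + 1$, applied to each connected component when $M$ is disconnected.
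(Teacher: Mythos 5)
Your proposal is correct and follows essentially the same route as the paper: the same invariant-subset argument for connectedness, the same counterclockwise corner-trace showing that one full turn about an upper-right corner advances the index by $\tilde{\sigma}$, and the same Euler-characteristic count $F=k$, $E=2k$. The only difference is that you carry out explicitly the four-crossing bookkeeping (UR $\to$ UL $\to$ LL $\to$ LR $\to$ UR) that the paper compresses into the single sentence ``the commutator exactly encodes the trajectory of a counter-clockwise circle,'' and your trace is accurate.
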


\begin{corollary}
Let $\tilde{\sigma}$ be an arbitrary element of the commutator subgroup of $\Sigma_k$.  Then the number of cycles in $\tilde{\sigma}$ is either odd or even according to whether $k$ is odd or even.
\begin{proof}
It is not difficult to show (for example, by induction on $n$) that the commutator subgroup of $\Sigma_k$ is $A_k$, the alternating subgroup, and that $A_k$ is in fact exactly the set of commutators.  The claim may be then shown directly for any element of $A_n$.  However, it may also be derived here from the Euler characteristic and Theorem \ref{theorem - stratum information}: the genus must be an integer.  In the event that $\tilde{\sigma}$ is the commutator of two permutations with a shared proper invariant subset, the corresponding translation surface is not connected, but we may apply the same reasoning to each connected component.
\end{proof}
\end{corollary}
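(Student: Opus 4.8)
The plan is to reduce the statement to the standard relationship between the sign of a permutation and the number of cycles in its disjoint-cycle decomposition, where fixed points are counted as cycles of length one. This convention is exactly the one forced by Theorem~\ref{theorem - stratum information}, in which a fixed point of $\tilde\sigma$ contributes a genuine vertex (of cone angle $2\pi$) to the Euler-characteristic count, so that the cycle lengths sum to $k$.

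First I would note that every element of the commutator subgroup is even: since $\Sigma_k/A_k \cong \mathbb{Z}/2\mathbb{Z}$ is abelian, the commutator subgroup $[\Sigma_k,\Sigma_k]$ is contained in $A_k$. (In fact equality holds, but only this inclusion is needed for the argument that follows.) Thus it suffices to prove the claim for an arbitrary even permutation $\pi$.

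Next I would invoke the cycle-counting formula for the sign. If $\pi$ has disjoint-cycle decomposition with cycles of lengths $\ell_1,\dots,\ell_c$, counted so that fixed points appear as cycles of length one and hence $\sum_{m=1}^{c}\ell_m = k$, then each cycle of length $\ell_m$ is a product of $\ell_m-1$ transpositions, whence
\[
\mathrm{sign}(\pi) = (-1)^{\sum_{m=1}^{c}(\ell_m-1)} = (-1)^{k-c}.
\]
Since $\pi\in A_k$ has $\mathrm{sign}(\pi)=+1$, the exponent $k-c$ must be even, and therefore $c \equiv k \pmod 2$. This is precisely the assertion that the number of cycles $c$ is odd exactly when $k$ is odd.

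I do not expect any genuine obstacle here, as the result is a routine consequence of the sign formula; the only point requiring care is the bookkeeping convention that fixed points are counted as cycles, which is exactly what guarantees $\sum_m \ell_m = k$ and which matches the vertex count in Theorem~\ref{theorem - stratum information}. As an alternative more closely tied to the geometric framework, one could instead observe that whenever $\tilde\sigma=[\sigma_v,\sigma_h]$ is realized by a \emph{connected} parallelogram-tiled surface, Theorem~\ref{theorem - stratum information} gives $g = \tfrac{k-V}{2}+1$ with $V$ the number of cycles, so integrality of the genus forces $k\equiv V\pmod 2$. Because $A_k$ coincides with the set of commutators in $\Sigma_k$, every such $\tilde\sigma$ arises in this way, and any disconnected case is handled by applying the same count to each connected component.
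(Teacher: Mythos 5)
Your proof is correct and follows essentially the same route as the paper's: the paper also reduces to the fact that commutators lie in $A_k$ and then invokes the standard sign--cycle-count relation (which it leaves as "may be shown directly"), offering the Euler-characteristic/integral-genus argument as a secondary derivation, exactly as you do. Your only refinements are to spell out the computation $\mathrm{sign}(\pi)=(-1)^{k-c}$ explicitly and to observe that the inclusion $[\Sigma_k,\Sigma_k]\subseteq A_k$ suffices for the main argument, whereas the paper (and your geometric alternative) also uses the stronger fact that $A_k$ is exactly the set of commutators.
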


Now suppose that we flow along $M$ in a rational direction (recall that without loss of generality we are using unit squares as tiles).  Assume that the slope is $q/p \geq 1$, so that in the time necessary for the flow to traverse a unit of vertical distance, it traverses a horizontal distance of $p/q$.  Then we define a new tiling of $M$ by $kq$ parallelograms: the new tile of index $(i-1)q+j$ (for $i=1,\ldots,k$ and $j=1,\ldots,q$) is defined to be the parallelogram given by flowing the interval $[(j-1)/q,j/q)$ in the base of the square of index $i$ for a vertical distance of one.  See Figure \ref{figure - retiling}.  If the flow were of slope less than one, we could perform an analogous retiling using intervals of the vertical edges of the original tiles, flowed to a horizontal distance of one.

The new tiling can also be described with a pair of permutations $\sigma_h'$, $\sigma_v'$.  We are free to choose which direction of these new parallelograms should correspond to `vertical'; whether the direction $q/p$ was larger or less than one, there is a natural choice.  As our figure uses a flow of slope larger than one, we let the flow correspond to the `vertical' direction in the new tiles.

\begin{figure}[htb]
\center{
\begin{tikzpicture}
\draw (0,0) rectangle (5,5);
\filldraw[fill=cyan, draw=black] (1,0) -- (3,5) -- (4,5) -- (2,0) -- (1,0);
\node[below] at (1,0){$\frac{j-1}{q}$};
\node[below] at (2,0){$\frac{j}{q}$};
\draw[->] (.5,.5) -- node[left]{$q/p$} (8/5+1/2,4.5);
\draw[->] (2.5,2.5) --  (3.5,2.5) node[below]{$\sigma'_h$};
\draw[->] (3.4,4.5) --  (3.8,5.5) node[right]{$\sigma'_v$};
\end{tikzpicture}
}
\caption{\label{figure - retiling} Retiling the surface $M$ based on the flow in direction $q/p\geq 1$.  If the large square is the original tile of index $i$, then the highlighted parallelogram is the new tile of index $(i-1)q+j$.}
\end{figure}
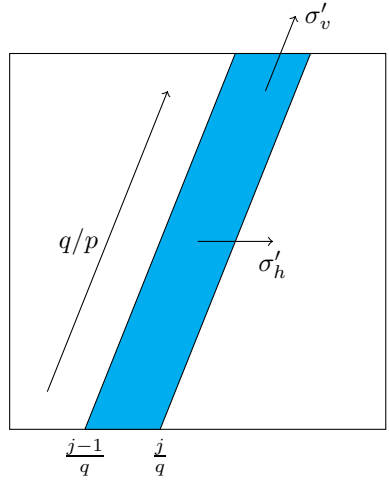

\begin{lemma}
The new permutations are defined by
\[ \sigma_h'(iq+j) = \begin{cases} (i-1)q+j+1 & (j<q)\\ (\sigma_h(i)-1)q+1 & (j=q) ,\end{cases}\]
\[ \sigma_v'(iq+j) = \begin{cases} (\sigma_v(i)-1)q+j+p & (j < q-p) \\ ((\sigma_v \circ \sigma_h)(i)-1)q+j+p-q& (j\geq q-p).\end{cases}\]
\begin{proof}
The horizontal identification is direct (refer again to Figure \ref{figure - retiling}).  For the vertical identifications, note that flowing to traverse one unit of vertical distance, we either flow only through the top of a square, or first across the right edge and then through the top, exactly according to whether the base of the interval is less than $(1-p/q)$ or not.
\end{proof}
\end{lemma}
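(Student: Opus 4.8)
The plan is to read both permutations directly off the geometry of the retiling in Figure~\ref{figure - retiling}, working entirely in the coordinates of a single original unit square and tracking where a thin parallelogram strip is carried. Throughout I fix the tile $(i-1)q+j$ to be the image of the base subinterval $[(j-1)/q,j/q)$ of square $i$ under the flow of slope $q/p$ through one unit of vertical height. Since the slope is $q/p\ge 1$, this flow drifts every point of the strip horizontally to the right by exactly $p/q$, carrying the base strip to the translate $[(j-1+p)/q,(j+p)/q)$ at height one. I also assume $\gcd(p,q)=1$, which is what makes the $kq$ parallelograms an honest tiling and each permutation a genuine bijection.

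For $\sigma_h'$ I simply identify the strip lying immediately to the right. Within a single square the strips are the consecutive subintervals $[(j-1)/q,j/q)$, so the right neighbour of tile $(i-1)q+j$ is $(i-1)q+(j+1)$ whenever $j<q$. When $j=q$ the strip is $[(q-1)/q,1)$ and its right neighbour lies across the right edge of square $i$, hence in square $\sigma_h(i)$ at its leftmost strip $[0,1/q)$, which is tile $(\sigma_h(i)-1)q+1$. This reproduces the asserted formula and requires no further computation.

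The vertical identification is the substantive step, and is where I expect the only real care to be needed. Flowing the strip $[(j-1)/q,j/q)$ up to height one produces the translate $[(j-1+p)/q,(j+p)/q)$, and the whole question is whether this translate still lies inside $[0,1)$ or has crossed the right edge $x=1$. A short integrality remark removes any ambiguity about partial crossings: a translate has width $1/q$, so the integer $1$ can never lie strictly interior to it, and therefore each strip crosses the line $x=1$ either not at all or completely. The translate stays inside $[0,1)$ exactly when $(j+p)/q\le 1$, i.e.\ when $j+p\le q$; in that case the flow exits through the top of square $i$, lands in $\sigma_v(i)$ on the base strip $[(j-1+p)/q,(j+p)/q)$, and is sent to tile $(\sigma_v(i)-1)q+(j+p)$. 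When instead $j+p>q$ the strip first crosses the right edge into $\sigma_h(i)$; reducing the horizontal coordinate modulo $1$ replaces the index $j+p$ by $j+p-q$, and only after this does the flow exit through the top, into $\sigma_v(\sigma_h(i))=(\sigma_v\circ\sigma_h)(i)$, giving tile $((\sigma_v\circ\sigma_h)(i)-1)q+(j+p-q)$. Because the total horizontal drift $p/q\le 1$, at most one right-edge crossing can occur, so no further composition with $\sigma_h$ is required.

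The main obstacle, and really the only delicate point, is the bookkeeping at the threshold $j+p=q$, where the translated strip ends exactly at $x=1$. Since the strips are half-open on the right, this borderline translate still lies in $[0,1)$ and belongs to the straight-through alternative, so the clean dividing line is $j+p\le q$ versus $j+p>q$ (equivalently $j\le q-p$ versus $j>q-p$); I would state the cases in this form and note that it matches the displayed formulas. Finally I would record the routine consistency check that both maps are permutations of the $kq$ strips: $\sigma_h'$ is manifestly a bijection refining the cycle structure of $\sigma_h$, and $\sigma_v'$ is a bijection because it is the top-to-bottom gluing induced by an area-preserving flow on a complete tiling, so every parallelogram top is matched to exactly one parallelogram bottom.
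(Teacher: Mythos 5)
Your proof is correct and is essentially the paper's own argument spelled out in full: the horizontal identification is read off directly from the figure, and the vertical one follows by tracking whether the translated strip $[(j-1+p)/q,(j+p)/q)$ remains in $[0,1)$ (exit through the top into $\sigma_v(i)$) or lies past the right edge (pass into $\sigma_h(i)$ first, then exit into $(\sigma_v\circ\sigma_h)(i)$), with your integrality remark correctly ruling out partial crossings. One caveat: your (correct) case division $j\le q-p$ versus $j>q-p$ does \emph{not} actually match the displayed formula as you claim --- the formula as printed assigns the boundary case $j=q-p$ to the second alternative, where it returns the out-of-range offset $((\sigma_v\circ\sigma_h)(i)-1)q+0$ instead of the correct $(\sigma_v(i)-1)q+q$; your derivation thus exposes an off-by-one in the statement itself (the paper's prose criterion, ``base $<1-p/q$,'' agrees with your version).
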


These permutations are of limited practical use: the number of tiles increases with the complexity of the slope.  We may, however, reduce the problem to a permutation on $k$ elements if we relax the requirements on the parallelograms.  In a tiling, it is necessary for all singularities of the metric to belong to the set of corners of the tiles.  Without this requirement, however, we may cover $M$ with $k$ parallelograms with sides horizontal and of slope $q/p$:  the parallelogram of index $i$ is given by flowing the interval $[0,1/q)$ on the bottom of the square of index $i$ through a vertical distance of $q$.

It is direct to see that the $k$ parallelograms defined in this way are disjoint: the induced rotation on the bases is of $p/q$ for each unit of vertical distance traversed, so a vertical distance of $q$ is traversed before the left endpoint of this interval intersects an integer point.  These tiles must therefore cover $M$, as we construct $k$ such tiles and each has equal measure to the original squares.  It is not a proper tiling, however, as the singularities will presumably belong to the sides of these parallelograms. Now compare with the proper retiling by $kq$ parallelograms, each of the $k$ parallelograms in the improper tiling is tiled by $q$ proper parallelograms, so the proper tiling is a subtiling of the improper tiling. 
The permutation $\hat{\sigma} \in \Sigma_k$ 
\[\hat{\sigma}(i) = (\sigma_v')^q ((i-1)q+1)\]
is the permutation induced by this subtiling in the flow direction.

Then the following is clear by our construction:
\begin{theorem}
The flow in slope $q/p$ is a single-cylinder direction if and only if $\hat{\sigma}$ is a single cycle on $k$ elements.
\end{theorem}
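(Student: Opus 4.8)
The plan is to interpret $\hat{\sigma}$ as the first-return permutation of the flow on the $k$ improper parallelograms, after which the equivalence becomes a matter of matching cycles of $\hat{\sigma}$ to periodic cylinders. The flow in the rational direction $q/p$ is periodic, so $M$ decomposes into finitely many periodic cylinders, each of width $1/q$; I would show that these cylinders are in bijection with the cycles of $\hat{\sigma}$.

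The key step is to verify that $\hat{\sigma}$ records exactly which improper parallelogram one enters upon flowing out the top of another. By construction the improper parallelogram of index $i$ is the union of the $q$ proper parallelograms indexed $(i-1)q+1,\ldots,iq$, stacked in the flow direction, and $\sigma_v'$ advances one proper level along the flow. Thus $(\sigma_v')^q$ applied to the bottom-left proper tile $(i-1)q+1$ traverses the full vertical height $q$ of the improper parallelogram. Over this vertical distance the base subinterval $[0,1/q)$ drifts horizontally by exactly $p$, an integer, so it returns to $[0,1/q)$ modulo one and lands on the bottom-left proper tile $(\hat{\sigma}(i)-1)q+1$ of a uniquely determined improper parallelogram $\hat{\sigma}(i)$. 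This is precisely the index bookkeeping implicit in the formula $\hat{\sigma}(i)=(\sigma_v')^q((i-1)q+1)$, and it identifies $\hat{\sigma}$ as the map ``exit the top, enter the bottom'' on improper parallelograms.

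With this in hand, cylinders assemble directly from orbits of $\hat{\sigma}$. Following the flow, improper parallelogram $i$ glues along its full top edge to the full bottom edge of $\hat{\sigma}(i)$, then to $\hat{\sigma}^2(i)$, and so on; a periodic orbit closes exactly when the chain returns to its starting tile, i.e.\ after one full cycle of $\hat{\sigma}$. Since every improper parallelogram has the same width $1/q$ and the gluings match full edges, the union of the parallelograms in a cycle of length $\ell$ is a single connected periodic cylinder of width $1/q$ and vertical height $\ell q$; distinct cycles give distinct cylinders, and together they cover $M$. Hence the flow in direction $q/p$ is a single-cylinder direction if and only if all $k$ parallelograms lie in one cycle, that is, if and only if $\hat{\sigma}$ is a single $k$-cycle.

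I expect the only genuine obstacle to be confirming that the abstract permutation $\hat{\sigma}$ on $\{1,\ldots,k\}$ faithfully encodes the return dynamics, and in particular that no cycle can split into several parallel strands. The latter is ruled out because all improper parallelograms share the common width $1/q$ and the identifications are by full edges, so each cycle closes into exactly one connected cylinder; once this is granted, the equivalence is immediate from the construction, which is why the statement can reasonably be advertised as clear.
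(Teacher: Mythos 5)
The paper offers no proof of this statement at all (it is introduced with ``the following is clear by our construction''), so your write-up is essentially the intended argument made explicit, and your ``if'' direction is fine: when $\hat{\sigma}$ is a $k$-cycle the $k$ improper parallelograms concatenate into one full-measure periodic strip. One local slip: the improper parallelogram of index $i$ is \emph{not} the union of the proper tiles $(i-1)q+1,\ldots,iq$ --- those are the $q$ side-by-side vertical slices of the original square $i$, which are cycled by $\sigma_h'$, not stacked by $\sigma_v'$. The correct decomposition is into the tiles $(\sigma_v')^m((i-1)q+1)$ for $m=0,\ldots,q-1$; your subsequent reasoning only uses this correct fact, so the slip does not derail the argument, but the sentence as written is false.

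The genuine gap is in the ``only if'' direction, precisely at the clause ``distinct cycles give distinct cylinders,'' which you assert but do not justify; you instead spend effort ruling out the non-issue of a cycle splitting into strands. The real danger is the opposite one: two adjacent width-$1/q$ strips whose common boundary circle contains no singularity of the flat metric \emph{merge} into a single maximal cylinder of width $2/q$. This actually happens. Take $k=6$, $\sigma_h=(14)(2536)$, $\sigma_v=(123)(456)$: the commutator is $(456)$ with three fixed points, so $M$ is a connected surface in $\mathcal{H}(2)$ with three regular vertices. In the vertical direction the two strips are the columns $\{1,2,3\}$ and $\{4,5,6\}$; one of their two common boundary circles passes only through the three regular vertices, so the complement of the (three) vertical saddle connections is a single maximal cylinder of width $2$ and circumference $3$. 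Thus the direction admits a representation by a single periodic cylinder while the relevant permutation has two cycles. The equivalence is therefore only correct under the convention --- implicit in the paper, which elsewhere always takes its periodic cylinders to have width exactly $1/q$ --- that \emph{every} vertex of the square tiling is treated as a marked point. Under that convention your assertion can be justified in one line: each strip boundary circle is the orbit of a vertex of the tiling, hence contains a marked point, so strips coincide with maximal cylinders of the marked surface. You should either add that justification together with the marking convention, or note that without it the ``only if'' implication (and hence Corollary A.5 of the paper) fails as stated.
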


Generation of $\hat{\sigma}$ from the original pair $(\sigma_h,\sigma_v)$ is intimately related to Farey series, the Stern-Brocot tree, and a notion from combinatorics (used mostly in computer science) called \textit{Lyndon words}.  On $\{0,1\}^{\mathbb{N}}$, define the homomorphism $\phi$ by
\[ \phi(0)= \sigma_v, \quad \phi(1) = \sigma_v \circ \sigma_h,\] and we construct words $\omega_{p/q}$ of length $q$ with exactly $p$ ones in the following manner, beginning with $\omega_0=0$ and $\omega_1=1$: let $p/q<p'/q'$ and assume that $p'q-pq'=1$.  Let $p''/q''=(p+p')/(q+q')$ be the \textit{Farey sum} (the `bad arithmetic sum').  Then $\omega_{p''/q''}=\omega_{p/q} \omega_{p'/q'}$.

See \cite{lothaire} for background in combinatorics on words; we will not present that background here.  A binary word of finite length is called a \textit{Lyndon word} if each left factor has an average strictly smaller than the overall average of the word, i.e. it is strictly smaller in lexicographic order than all of its rotations.

Then our Farey-series-based construction generates all (binary, finite) Sturmian Lyndon words; the correspondence is made quite explicit by considering the geometric construction of Sturmian words by counting crossings of vertical versus horizontal integer-grid segments, and the Lyndon property is quickly seen to be inherited by the given order of composition under the assumption that both $\omega_{p/q}$ and $\omega_{p'/q'}$ are Lyndon words.  That of all primitive Sturmian words of a given length and density there is only one which is also a Lyndon word is fairly direct to show; a proof appears in \cite{ralston2}.

Note that we defined $\hat{\sigma}$ to track the orbit of the interval $[0,1/q)$ on the base of a square.  Considering a different interval or a vertical segment would amount to performing a fixed cyclic transformation on the sequence of permutations used to define $\hat{\sigma}$.  That is, instead of considering
\[\hat{\sigma} = \tau_1 \circ \tau_2 \circ \cdots \circ \tau_n\]
where each $\tau_i$ is either $\sigma_h$ or $\sigma_v$, we would consider 
\[\tau_k \circ \tau_{k+1} \circ \cdots \circ \tau_n \circ \tau_1 \circ \cdots \circ \tau_{k-1}.\]  This new permutation is naturally seen to be in the same conjugacy class as $\hat{\sigma}$, and therefore has the same number and length of cycles.  Similarly, as the labeling of the tiles was arbitrary, it is only the conjugacy class/cycle decomposition of the resulting permutation that is of interest.  It would be of considerable interest to classify the permutation pairs $(\sigma_h, \sigma_v)$ on $k$ elements which generate a ``Farey-Sturmian" composition in the described manner which is a $k$-cycle.  Together with the cycle decomposition of the commutator $[\sigma_v, \sigma_h]$, knowing which pairs can generate $k$-cycles would classify the square-tiled surfaces with a single-cylinder direction!

\begin{corollary}\label{corollary - no single cylinder}
If $\sigma_h$ and $\sigma_v$ generate a subgroup of $\Sigma_k$ which contains no $k$-cycle, then $M$ admits no single-cylinder direction.
\begin{proof}
For slopes larger than one the result is immediate.  For slopes less than one, we would perform an analogous construction as indicated previously with the same result.  Alternately, any cylinder decomposition for a slope less than one may be mapped via the Veech group to a slope larger than one.
\end{proof}
\end{corollary}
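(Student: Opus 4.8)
The plan is to combine the characterization established immediately above---that the flow in slope $q/p$ decomposes $M$ into a single periodic cylinder precisely when the permutation $\hat\sigma$ is a single $k$-cycle---with the structural observation that $\hat\sigma$ always lies in the subgroup $\langle\sigma_h,\sigma_v\rangle$. Since for a square-tiled surface the directions admitting a representation by periodic cylinders are exactly the rational-slope directions (an irrational direction yields a uniquely ergodic, non-periodic flow, which admits no decomposition into periodic cylinders at all), it suffices to exclude every rational slope.

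First I would treat slopes $q/p\ge 1$. Recall that $\hat\sigma$ is assembled via the homomorphism $\phi$ with $\phi(0)=\sigma_v$ and $\phi(1)=\sigma_v\circ\sigma_h$; equivalently $\hat\sigma=\tau_1\circ\cdots\circ\tau_n$ with each $\tau_m\in\{\sigma_h,\sigma_v\}$. Any such word belongs to $\langle\sigma_h,\sigma_v\rangle$, and by hypothesis this subgroup contains no $k$-cycle, so $\hat\sigma$ cannot be a $k$-cycle; by the preceding theorem the slope $q/p$ is therefore not a single-cylinder direction. The horizontal and vertical directions are degenerate cases of the same phenomenon: they split into cylinders indexed by the cycles of $\sigma_h$ and of $\sigma_v$ respectively, and since neither generator is itself a $k$-cycle, these directions are excluded as well.

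For slopes $0<p/q<1$ I would run the mirror construction, retiling by parallelograms cut from subintervals of the \emph{vertical} edges and flowed to unit horizontal displacement. This merely interchanges the roles of $\sigma_h$ and $\sigma_v$, so the resulting cylinder-tracking permutation is again a word in the two generators, hence an element of the same subgroup $\langle\sigma_h,\sigma_v\rangle=\langle\sigma_v,\sigma_h\rangle$; the single-cylinder criterion once more demands a $k$-cycle, which the hypothesis forbids. Alternatively one may invoke the Veech group, using that a quarter-turn carries a direction of slope less than one to a direction of slope at least one while preserving the number of cylinders in the decomposition, so that a single-cylinder direction of small slope would force one of large slope, contradicting the first case.

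The only point requiring genuine care---and the crux of the whole argument---is the verification that the permutation encoding the cylinder decomposition lies in $\langle\sigma_h,\sigma_v\rangle$ for \emph{every} rational direction rather than merely for the specific slopes drawn in the construction. This is immediate once one notes that the retiling permutations $\sigma_h'$ and $\sigma_v'$ are defined purely by composing the original side-identifications (and their inverses, which remain in the subgroup), so that no element outside $\langle\sigma_h,\sigma_v\rangle$ can ever arise; the hypothesis that this subgroup omits all $k$-cycles then closes off every rational direction at once.
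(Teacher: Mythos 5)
Your argument is correct and follows the paper's own route exactly: for slopes at least one you observe that $\hat\sigma$ is a word in $\sigma_h$ and $\sigma_v$, hence lies in $\langle\sigma_h,\sigma_v\rangle$ and cannot be a $k$-cycle, and for slopes less than one you invoke either the mirrored retiling or the Veech-group (quarter-turn) reduction, which are precisely the two alternatives the paper offers. The only difference is that you spell out the ``immediate'' step and the degenerate horizontal/vertical cases explicitly, which is a welcome but not substantively different elaboration.
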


Consider for example the surfaces given in Figure \ref{figure - six squares}, with squares labeled as in that figure:
\[ \sigma_h = (1 \ldots (2n+1))((2n+2) \ldots (4n+2)),\] \[\sigma_v = (1)(2 (2n+2))(3 (2n+3)) \ldots ((2n+1)(4n+1))(4n+2).\]
Both $\sigma_h$ and $\sigma_v$ are in the alternating group $A_6$ ($\sigma_h$ is the product of two disjoint cycles of equal length, and $\sigma_v$ is explicitly given as an even number of transpositions), but a $4n+2$-cycle is a composition of $4n+1$ transpositions.  Therefore by the above corollary, \textit{this surface admits no single-cylinder directions}.

\begin{example}The \textit{eierlegende Wollmilchsau} is the surface on eight squares defined by
\[ \sigma_h=(1234)(5678), \quad \sigma_v=(1836)(2745).\]  This surface is of genus $3$ with four singularities of cone angle $4 \pi$: from the permutation presentation, the number of vertices (and therefore the genus) and their cone angles may be read off the commutator of $\sigma_v$ and $\sigma_h$: 
\[ [\sigma_v, \sigma_h]=(13)(24)(57)(68).\]  Both $\sigma_h$ and $\sigma_v$ are in $A_8$, while a complete $8$-cycle has seven transpositions.  Therefore this surface has no single-cylinder directions.  This result also follows from the fact that the Veech group is all of $\textrm{SL}_2(\mathbb{Z})$ \cite{herr-schmith}.  Therefore all periodic decompositions are of the same type: two cylinders of equal heights and measures.
\end{example}

\begin{corollary}
Let $\sigma_v$ and $\sigma_h$ define a (connected) square-tiled surface, and let $\tilde{\sigma}=[\sigma_v,\sigma_h]$.  If $\tilde{\sigma}$ contains at least $d$ cycles of length one, then it is possible to construct a $\mathbb{Z}^d$ natural staircase whose geodesic flow is ergodic in almost every direction.  On the other hand, if there are no cycles of length one and $M \in \mathcal{H}(2)$, then no natural staircase is ergodic in almost every direction.
\begin{proof}
The statement about the possibility of ergodic $d$-dimensional natural staircases follows from Corollary \ref{corollary - ergodic natural staircases}, using the vertices which are not singularities (fixed points under $\tilde{\sigma}$) as endpoints of distinct cuts whose values generate $\mathbb{Z}^d$. If there are no cycles of length one, then all integer points are singularities.  All cuts are therefore nontrivial in the relative homology group $H_1(M, D, \mathbb{Z})$.  In the event that $M \in \mathcal{H}(2)$, then, the geodesic flow is not ergodic in almost every direction, again citing \cite{fraczek-ulcigrai}.
\end{proof}
\end{corollary}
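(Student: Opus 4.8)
The plan is to convert the combinatorial hypothesis on $\tilde\sigma$ into geometric information about the integer vertices of $M$ and then feed that into the staircase machinery already in place. The one nontrivial input is the dictionary supplied by Theorem \ref{theorem - stratum information}: each cycle of $\tilde\sigma=[\sigma_v,\sigma_h]$ names a vertex of $M$ whose cone angle is $2\pi$ times the cycle length. Hence a cycle of length one corresponds to a vertex of cone angle $2\pi$, i.e.\ to an integer point that is \emph{not} a singularity, while every cycle of length at least two corresponds to a genuine singularity. Everything else in both directions is bookkeeping on top of this correspondence.

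For the positive direction, suppose $\tilde\sigma$ has fixed points $d_1,\dots,d_d$. By the dictionary each $d_r$ is a non-singular integer vertex. First I would attach to each $d_r$ a single pair of parallel, side-parallel unit cuts carrying $f=\pm e_r$, routing them so that $d_r$ is an endpoint of the $+e_r$ cut and is shared by no other cut (with $d$ distinct fixed points available, the assignment and the disjointness of cuts can be arranged); this is exactly a $\mathbb{Z}^d$ natural staircase. Each $d_r$ then plays the role of the distinguished endpoint in Corollary \ref{corollary - ergodic natural staircases}, so (using, as that corollary requires, a single-cylinder direction of $M$) each $e_r$ is an essential value. Since the $e_r$ generate $\mathbb{Z}^d$ and the essential values form a closed subgroup, $E(\tilde\varphi_t)=\mathbb{Z}^d$; together with ergodicity of the base flow in almost every direction this gives ergodicity of the staircase in almost every direction.

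For the negative direction, assume $\tilde\sigma$ has no fixed point, so \emph{every} integer point of $M$ is a singularity. In a natural staircase all cuts have integer endpoints, hence every cut runs between points of $D$; this is precisely the condition ensuring the $\mathbb{Z}^d$ cover is unramified (equivalently, each unit cut is a nontrivial relative cycle in $H_1(M,D,\mathbb{Z})$, its boundary being a nonzero $0$-chain on $D$). For a $\mathbb{Z}$-staircase this is the hypothesis of \cite[Thm.\ 1.4]{fraczek-ulcigrai}, which for $M\in\mathcal{H}(2)$ forbids ergodicity in almost every direction. To handle a general $\mathbb{Z}^d$ staircase I would project the cocycle onto one coordinate: the projection is a factor of $\{\tilde{M},\tilde\varphi_t\}$ and is itself an unramified $\mathbb{Z}$-cover (it uses only one pair of cuts, still with singular endpoints), so Fra\c{c}zek--Ulcigrai makes the factor non-ergodic and therefore the full system non-ergodic.

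The main obstacle is the precise justification that ``all cut endpoints are singularities'' forces the cover to be unramified, together with the reduction of the $\mathbb{Z}^d$ case to the $\mathbb{Z}$ case so that \cite{fraczek-ulcigrai} applies verbatim; both rest on tracking the local monodromy around each endpoint and on the elementary fact that a factor of an ergodic system is ergodic. A secondary caveat is the single-cylinder hypothesis silently inherited from Corollary \ref{corollary - ergodic natural staircases} in the positive direction: for a surface lacking such a direction one must instead establish by hand the uniform bound on cylinder sums demanded by Theorem \ref{theorem - get essential values in staircases}, for instance via an involution symmetry as in the six-square example.
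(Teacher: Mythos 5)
Your proof follows essentially the same route as the paper's: you use the dictionary of Theorem \ref{theorem - stratum information} to identify fixed points of $\tilde{\sigma}$ with non-singular integer vertices, attach to each a pair of cuts with a free endpoint and invoke Corollary \ref{corollary - ergodic natural staircases} for the positive direction, and derive unramifiedness from all endpoints lying in $D$ to invoke Fr\c{a}czek--Ulcigrai for the negative direction; your explicit projection onto a $\mathbb{Z}$-factor for $d\geq 2$ and your flagging of the tacit single-cylinder hypothesis (which the paper's own statement omits but its cited corollary requires) are both apt refinements of the paper's terser argument. One small slip worth correcting: for $M\in\mathcal{H}(2)$ the set $D$ is a single point, so a cut with both endpoints singular has boundary equal to the \emph{zero} $0$-chain, not a nonzero one --- the cover is unramified simply because no new branch point is introduced outside $D$, not because the relative cycle has nonzero boundary.
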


The above techniques may be used to quickly generate surfaces with single-cylinder directions.  Assuming that a surface is tiled by parallelograms one through $k$ such that vertical flow is a single cylinder, the permutation $\sigma_h$ therefore encodes the stratum of the surface.  Letting the tiles be given in order, so that $\sigma_v(i)=i+1 \mod k$, the singularities may be deduced from the cycle decomposition of the commutator
\[[\sigma_v,\sigma_h](i)= \sigma_h^{-1}\left(\sigma_h(i)+1\right)-1.\]

For arbitrary dimension $d$ the techniques of this appendix may be used to study $d$-cube tiled translation manifolds through the use of $d$ permutations (for the relevant face identifications in each dimension).  A single-cylinder direction therefore gives rise to a parallelepiped tiling for which one of the permutations is a single cycle.  The geometric data is still encoded by the permutations, if more involved to extract.  For example, the number of connected components still corresponds directly to the number of invariant sets in the group generated by the given permutations, and the singularities of the flat metric at certain $(d-2)$-dimensional simplices involves commutators of the permutations.  As geodesic flows in higher dimensions are poorly understood at present, we have phrased all the above in the particular case $d=2$.

\bibliography{ergodic-extensions-bifile}

\begin{thebibliography}{10}

\bibitem{A}
J.~Aaronson.
\newblock {\em An introduction to infinite ergodic theory}, volume~50 of {\em
  Mathematical Surveys and Monographs}.
\newblock American Mathematical Society, Providence, RI, 1997.

\bibitem{boshernitzan}
M.~{Boshernitzan}.
\newblock {A criterion for weak mixing of induced interval exchange
  transformations}.
\newblock {\em ArXiv e-prints}, may 2011.

\bibitem{chaika-hubert}
J.~Chaika and P.~Hubert.
\newblock in preparation.
\newblock preprint.

\bibitem{conze-cocycles-rotation}
J.-P. Conze.
\newblock Recurrence, ergodicity and invariant measures for cocycles over a
  rotation.
\newblock {\em Contemporary Mathematics}, 485:45--70, 2009.

\bibitem{conze-fraczek}
J.-P. {Conze} and K.~{Fr\c{a}czek}.
\newblock {Cocycles over interval exchange transformations and multivalued
  Hamiltonian flows}.
\newblock {\em ArXiv e-prints}, mar 2010.

\bibitem{CG}
J.-P. Conze and E.~Gutkin.
\newblock On recurrence and ergodicity for geodesic flows on noncompact
  periodic polygonal surfaces.
\newblock preprint.

\bibitem{DHL}
V.~Delecroix, P.~Hubert, and S.~Leli\'{e}vre.
\newblock Diffusion for the periodic wind-tree model.
\newblock preprint.

\bibitem{fraczek-ulcigrai}
K.~{Fr{\c a}czek} and C.~{Ulcigrai}.
\newblock {Non-ergodic Z-periodic billiards and infinite translation surfaces}.
\newblock {\em ArXiv e-prints}, sep 2011.

\bibitem{gutkin-judge}
E.~Gutkin and C.~Judge.
\newblock Affine mappings of translation surfaces: geometry and arithmetic.
\newblock {\em Duke Math. J.}, 103(2):191--213, 2000.

\bibitem{herr-schmith}
F.~Herrlich and G.~Schmithüsen.
\newblock An extraordinary origami curve.
\newblock {\em Mathematische Nachrichten}, 281(2):219--237, 2008.

\bibitem{HoW}
P.~Hooper and B.~Weiss.
\newblock Generalized staircases: recurrence and symmetry.
\newblock preprint.

\bibitem{hooper-hubert-weiss}
W.P. Hooper, P.~Hubert, and B.~Weiss.
\newblock Dynamics on the infinite staircase.
\newblock {\em Disc. Cont. Dyn. Sys.}
\newblock to appear.

\bibitem{hubert-lelievre}
P.~Hubert and S.~Leli\`{e}vre.
\newblock Prime arithmetic {T}eichm\"{u}ller discs in $\mathcal{H}(2)$.
\newblock {\em Isr. J. Math.}, 151:281--321, 2006.

\bibitem{HLT}
P.~Hubert, S.~Leli\`{e}vre, and S.~Troubetzkoy.
\newblock The ehrenfest wind-tree model: periodic directions, recurrence,
  diffusion.
\newblock {\em Journal fuer die reine und angewandte Mathematik (Crelle's
  Journal)}, 656:223--244, 2011.

\bibitem{MR2186246}
P.~Hubert and T.~A. Schmidt.
\newblock An introduction to {V}eech surfaces.
\newblock In {\em Handbook of dynamical systems. {V}ol. 1{B}}, pages 501--526.
  Elsevier B. V., Amsterdam, 2006.

\bibitem{hubert-weiss}
P.~Hubert and B.~Weiss.
\newblock Ergodicity for infinite periodic translation surfaces.
\newblock preprint.

\bibitem{1986}
S.~Kerckhoff, H.~Masur, and J.~Smillie.
\newblock Ergodicity of billiard flows and quadratic differentials.
\newblock {\em The Annals of Mathematics}, 124(2):pp. 293--311, 1986.

\bibitem{KZ}
M.~Kontsevich and A.~Zorich.
\newblock Connected components of the moduli spaces of {A}belian differentials
  with prescribed singularities.
\newblock {\em Invent. Math.}, 153(3):631--678, 2003.

\bibitem{L}
E.~Lanneau and D.-M. Nguyen.
\newblock Teichm\"uller curves generatedby weierstrass prym eigenforms in genus
  three and genus four.
\newblock preprint.

\bibitem{lothaire}
M.~Lothaire.
\newblock {\em Combinatorics on words / M. Lothaire ; foreword by Roger
  Lyndon}.
\newblock Addison-Wesley, Advanced Book Program, World Science Division,
  Reading, Mass. :, 1983.

\bibitem{MR2186247}
H.~Masur.
\newblock Ergodic theory of translation surfaces.
\newblock In {\em Handbook of dynamical systems. {V}ol. 1{B}}, pages 527--547.
  Elsevier B. V., Amsterdam, 2006.

\bibitem{mcmullen}
Curtis~T. McMullen.
\newblock Teichmüller curves in genus two: Discriminant and spin.
\newblock {\em Mathematische Annalen}, 333:87--130, 2005.
\newblock 10.1007/s00208-005-0666-y.

\bibitem{Patterson}
S.~J. Patterson.
\newblock Diophantine approximation in {F}uchsian groups.
\newblock {\em Philos. Trans. Roy. Soc. London Ser. A}, 282(1309):527--563,
  1976.

\bibitem{ralston2}
D.~Ralston.
\newblock Heaviness in symbolic dynamics: substitution and {S}turmian systems.
\newblock {\em DCDS-S}, 2:287--300, 2009.

\bibitem{schmidt2}
K.~Schmidt.
\newblock {\em Cocycles of ergodic transformation groups}, volume~1 of {\em
  Lecture notes in mathematics}.
\newblock MacMillan, India, 1977.

\bibitem{schmidt}
K.~Schmidt.
\newblock A cylinder flow arising from irregularity of distribution.
\newblock {\em Compositio Mathematica}, 36(3):225--232, 1978.

\bibitem{schmithuesen}
G.~Schmith{\"u}sen.
\newblock Origamis with non congruence {V}eech groups.
\newblock In {\em Proceedings of 34th {S}ymposium on {T}ransformation
  {G}roups}, pages 31--55. Wing Co., Wakayama, 2007.

\bibitem{MR688349}
D.~Sullivan.
\newblock Disjoint spheres, approximation by imaginary quadratic numbers, and
  the logarithm law for geodesics.
\newblock {\em Acta Math.}, 149(3-4):215--237, 1982.

\bibitem{T}
S.~Troubetzkoy.
\newblock Recurrence in generic staircases.
\newblock 23:1047--1053, 2011.

\bibitem{veech}
W.~Veech.
\newblock Boshernitzan's criterion for unique ergodicity of an interval
  exchange transformation.
\newblock {\em Erg. Thry. Dyn. Sys.}, 7:149--153, 1987.

\bibitem{MR2261104}
A.~Zorich.
\newblock Flat surfaces.
\newblock In {\em Frontiers in number theory, physics, and geometry. {I}},
  pages 437--583. Springer, Berlin, 2006.

\end{thebibliography}
\bibliographystyle{plain}

\end{document}